\documentclass[a4paper,11pt]{article}
\usepackage{graphicx}
\usepackage{amssymb}
\usepackage{amsthm}
\usepackage{subcaption}
\usepackage{amsfonts}
\usepackage{amsmath}
\usepackage{hyperref}
\usepackage{xcolor}

\title{Weight-constrained cut-down de Bruijn sequences}
\author{Chris J Mitchell and Peter R Wild\\Information Security Group, Royal Holloway, University of London}
\date{7th August 2026 (v2)}

\theoremstyle{plain}
\newtheorem{lemma}{Lemma}[section]
\newtheorem{theorem}[lemma]{Theorem}
\newtheorem{corollary}[lemma]{Corollary}
\newtheorem{result}[lemma]{Result}

\theoremstyle{definition}
\newtheorem{definition}{Definition}[section]

\theoremstyle{remark}
\newtheorem{remark}{Remark}[section]

\begin{document}

\maketitle

\begin{abstract}
In 2011/12, Ruskey, Sawada, Stevens and Williams showed that a binary cut-down de Bruijn
sequence can be constructed containing all $n$-tuples of Hamming weight either $r$ or $r+1$ for
every possible $r$, and no others.  In this paper we examine extensions of this result that
require choosing a weight-like function applying to $n$-tuples with symbols taken from an
alphabet of arbitrary (finite) size. We consider three possible such weight functions, and in
each case establish an analogous result to that of Ruskey et al.
\end{abstract}

\section{Introduction} \label{section:introduction}

The main focus of this paper is a special class of \emph{universal sequences} (or \emph{universal
cycles}), namely --- at least for the purposes of this paper --- infinite periodic sequences with
elements from a finite set in which every $n$-tuple of a certain type occurs exactly once.
Universal sequences of a range of types have been widely studied --- see, for example, Chung et
al.\ \cite{Chung92}. According to Hollmann and van Lint \cite{Hollmann97}, the term universal
sequence in this context dates back to a 1985 paper of Lempel and Cohn \cite{Lempel85}, although
they use the term in a much more restricted sense. The more general definition we use here is
consistent with the rather abstract definition given by Chroman et al. \cite{Chroman21}.

In this paper we are interested in sequences including all $n$-tuples for which some
weight-like measure is restricted.  Such sequences have been previously considered in the
binary case by Sawada et al.~\cite{Sawada11} and Ruskey et al.~\cite{Ruskey12}, who considered
binary $n$-tuples of restricted Hamming weight. In this paper we generalise the main existence
result of Ruskey et al.\ to finite alphabets of arbitrary size. In doing so we look at three
main examples of weight-like functions, one case of which was previously considered by
Gabri\'{c} et al. \cite{Gabric20}.

\subsection{Background}

In this paper we consider periodic sequences $(s_i)$ with elements from $\mathbb{Z}_k$ for some
$k$, which we refer to as $k$-ary.  For a sequence $S = (s_i)$ and $n\geq1$ we write
$\mathbf{s}_n(i) = (s_i,s_{i+1},\ldots,s_{i+n-1})$, for an $n$-tuple corresponding to a substring
or factor of $n$ consecutive symbols occurring in the sequence at positions $i,i+1,\ldots,i+n-1$.
We say such an $n$-tuple appears in the sequence.

\begin{definition}[\cite{Alhakim24a}]
A $k$-ary \emph{$n$-window sequence $S = (s_i)$} is a periodic sequence of elements from
$\mathbb{Z}_k$ ($k>1$, $n>1$) with the property that no $n$-tuple appears more than once in a
period of the sequence, i.e.\ with the property that if $\mathbf{s}_n(i) = \mathbf{s}_n(j)$ for
some $i,j$, then $i \equiv j \pmod m$ where $m$ is the period of the sequence.  Such a sequence is
also known as a \emph{cut-down de Bruijn sequence} (see, for example, Cameron et al.\
\cite{Cameron25}), and this is the term we use here.
\end{definition}

A $k$-ary de Bruijn sequence \emph{of span $n$} is then simply an $n$-window sequence in which
every $k$-ary $n$-tuple appears once in a period, i.e.\ an $n$-window sequence of maximal period.

Following Alhakim et al.\ \cite{Alhakim24a} and many previous authors, we also introduce the
$k$-ary de Bruijn digraph. For positive integers $n$ and $k$ greater than one, let $\mathbb{Z}_k^n$
be the set of all $k^n$ tuples of length $n$ with entries from the group $\mathbb{Z}_k$ of residues
modulo $k$. The order $n$ de Bruijn digraph, $B_k(n)$, is a directed graph with $\mathbb{Z}^n_k$ as
its vertex set in which, for any two vertices $\textbf{x} = (x_0,x_1,\ldots,x_{n-1})$ and
$\textbf{y} = (y_0,y_1,\ldots,y_{n-1})$, the pair $(\textbf{x},\textbf{y})$ is an edge if and only
if $y_i = x_{i+1}$ for every $i$ ($0\leq i< n-1$). We label such an edge with the $(n+1)$-tuple
$(x_0,x_1,\ldots,x_{n-1},y_{n-1})$.

Note that we have defined two ways of specifying an edge in $B_k(n)$, namely as either a pair of
vertices $(\textbf{a},\textbf{b})$, where $\textbf{a},\textbf{b}$ are $k$-ary $n$-tuples, or as a
single $k$-ary $(n+1)$-tuple $\textbf{x}$.  We refer to an edge in $B_k(n)$ and a $k$-ary $n$-tuple
interchangeably throughout.

\begin{definition}  \label{definition:Eulerian} An \emph{Eulerian digraph} is a
strongly connected digraph, i.e.\ one in which there is a directed path between any pair of
vertices, for which every vertex has in-degree equal to out-degree.  This latter property
characterises a \emph{balanced} digraph.
\end{definition}

The name derives from the fact that there exists an Eulerian circuit, i.e.\ a directed path
visiting every edge once, in a digraph if and only if the digraph is Eulerian --- see, for example,
Corollary 6.1 of Gibbons \cite{Gibbons85} or Knuth \cite[Section 2.3.4.2, Theorem G]{Knuth97}.
Moreover, there are simple and efficient algorithms for finding Eulerian circuits --- see for
example \cite[Figure 6.5]{Gibbons85} or \cite[Section 2.3.4.2, Theorem D]{Knuth97}.

There is then a simple correspondence between a directed Eulerian circuit in $B_k(n-1)$ and a de
Bruijn sequence of span $n$; this generalises in an obvious way so that a directed circuit (or
cycle) in $B_k(n-1)$ of length $m\leq k^n$ corresponds to a cut-down de Bruijn sequence of period
$m$.

We also need the following.

\begin{definition}[Necklace --- see, for example, \cite{Ruskey92}]
A $k$-ary \emph{necklace} within the de Bruijn graph $B_k(n-1)$, written as
$[a_0,a_1,\dots,a_{m-1}]$, where $m\mid n$, consists of all distinct $n$-tuples of the form
$(a_{\overline{i}},a_{\overline{i+1}},\dots,a_{\overline{n-1}},a_0,a_1,\dots,a_{\overline{i-1}})$,
for $0\leq i<m$, i.e.\ all cyclic shifts, where $\overline{s}=s\bmod m$.
\end{definition}

De Bruijn sequences have a range of real-world applications, ranging from position location on a
linear track or rotating shaft \cite{Burns92,Burns93,Peng23,Petriu98,Yamaguchi05}, to genome
assembly \cite{Compeau11}. As observed by Cameron et al.\ \cite{Cameron25}, `for some applications
it may be more convenient to produce a cycle of arbitrary length such that there are no repeated
strings', i.e.\ a cut-down de Bruijn sequence of a particular period that derives from the
specifics of the application.

A number of authors have considered the problem of constructing cut-down de Bruijn sequences with
particular properties. Jackson et al.\ \cite[Problem 477]{Jackson09} observed that a binary
$n$-window sequence exists for every possible period $m$ satisfying $n\leq m\leq2^n$. Ruskey et
al.\ \cite{Ruskey12} showed that a cut-down de Bruijn sequence can be constructed containing all
$n$-tuples with weights in the range $[a,b]$ as long as $0\leq a<b\leq n$, and such sequences were
further studied by Li et al.\ \cite{Li23}.

Sequences containing only $n$-tuples with specified weights are a particular example of what are
sometimes known as `de Bruijn sequences with forbidden subsequences', as for example examined in
the binary case by Penne \cite{Penne10} and for arbitrary alphabets by Alhakim \cite{Alhakim22}.
Tan and Shallot \cite{Tan13}, studied the more general problem of the existence of sequences
containing only the members of a pre-specified set of $n$-tuples, where they allow such $n$-tuples
to occur multiple times in a period. These are all examples of universal sequences, as introduced
at the beginning of this paper.

\subsection{Cut down de Bruijn sequences in weight-restricted subgraphs}

As observed at the start of this paper, we are concerned with a particular class of cut-down de
Bruijn sequences, as previously considered by Sawada et al.~\cite{Sawada11} and Ruskey et al.
\cite{Ruskey12}. They showed that, if $n\geq1$ and $0\leq r<n$, the sub-graph of the de Bruijn
graph $B_2(n)$ consisting of all edges corresponding to $n$-tuples of Hamming weights $r$ and
$r+1$ is Eulerian, and hence there exists a cut down de Bruijn sequence which includes all
$n$-tuples of weights $r$ and $r+1$ (and no others).

In this note we consider to what extent a similar result holds for the $k$-ary de Bruijn graph.  Of
course, in this case there are a number of ways of computing the `weight' of an $n$-tuple, and we
therefore consider what happens for three different functions that assign a weight-like value to an
$n$-tuple.

\section{Weight functions for $k$-ary alphabets}

There are many ways of generalising the notion of Hamming weight of a binary $n$-tuple to the
$k$-ary case.  We briefly introduce some key possibilities here. The `standard' generalisation of
Hamming weight to the $k$-ary case is as follows.

\begin{definition}[Hamming weight]  \label{definition:Hamming_weight}
Suppose $k\geq2$ and $n\geq1$.  If $\mathbf{a}=(a_0,a_1,\dots,a_{n-1})$ is a $k$-ary $n$-tuple,
then the \emph{Hamming weight} $w_h(\mathbf{a})$ of $\mathbf{a}$ is equal to the number of values
$i$ for which $a_i\neq0$.
\end{definition}

It should be clear that in the binary case, i.e.\ where $k=2$, the Hamming weight corresponds to
the number of ones in the $n$-tuple, i.e.\ the expected definition.  An alternative measure for
$n$-tuples, which also corresponds to Hamming weight in the case $k=2$, is as follows.

\begin{definition}[Digit sum]  \label{definition:digit_sum}
Suppose $k\geq2$, $n\geq1$ and $\mathbf{a}=(a_0,a_1,\dots,a_{n-1})$ is a $k$-ary $n$-tuple. Then
the \emph{digit sum} (or \emph{symbol sum}) of $\mathbf{a}$ is defined as
\[ w_s(\mathbf{a}) = \sum_{i=0}^{n-1}a_i \]
where we compute the sum in $\mathbb{Z}$ and treat the values $a_i$ as integers in the range
$[0,k-1]$.
\end{definition}

\begin{remark}
Note that the digit sum is referred to simply as the \emph{weight} of an $n$-tuple in
\cite[Definition 11]{Mitchell25a}. It should be clear that the digit sum is also a natural
generalisation of Hamming weight for the binary case, although the digit sum does not have the
properties one would normally expect of a distance function.
\end{remark}

Yet another alternative way of assigning a numeric value to an $n$-tuple is as follows.

\begin{definition}[\cite{Mitchell25a}, Definition 10] \label{definition:pseudoweight}
Suppose $k\geq2$, $n\geq1$ and $\mathbf{a}=(a_0,a_1,\dots,a_{n-1})$ is a $k$-ary $n$-tuple. Define
the function $f:\mathbb{Z}_k\rightarrow\mathbb{Q}$ as follows: for any $a\in\mathbb{Z}_k$ treat $a$
as an integer in the range $[0,k-1]$ and set $f(a)=a$ if $a\neq0$ and $f(a)=k/2$ if $a=0$. Then the
\emph{pseudoweight} of $\mathbf{a}$ is defined to be the sum
\[ w_p(\mathbf{a}) = \sum_{i=0}^{n-1}f(a_i) \]
where the sum is computed in $\mathbb{Q}$.
\end{definition}

This apparently curious notion was introduced to help generate special classes of cut-down de
Bruijn sequences known as orientable sequences and negative orientable sequences (see, for example,
\cite{Mitchell26a}).

In the remainder of this paper we consider to what extent generalised versions of the Ruskey et
al.\ result \cite{Ruskey12} hold for these three weight functions.  We first need the following
preliminary definition and result that simplify arguments for all three weight functions we
examine.

\begin{definition}
Suppose $k\geq2$, $n\geq1$ and $w$ is function mapping $k$-ary $n$-tuples to $\mathbb{Q}$.  Then
$w$ is said to be \emph{permutation-agnostic} if, for any $k$-ary $n$-tuple $\mathbf{a}$,
$w(\mathbf{a})=w(p(\mathbf{a}))$ for every permutation $p$ acting on the $n$ entries in
$\mathbf{a}$.
\end{definition}

The three weight functions $w_h$, $w_s$ and $w_p$ are all permutation-agnostic since they are all
defined as a function of the elements of an $n$-tuple in a way that does not depend on the
ordering.

\begin{lemma}  \label{lemma:one_change}
Suppose $k\geq2$, $n\geq2$, $r\leq s\in \mathbb{Q}$, and $w$ is permutation-agnostic function
mapping $k$ary $n$-tuples to $\mathbb{Q}$.  Let $W_k^{r,s}(n-1)$ be the subgraph of $B_k(n-1)$
consisting of the edges corresponding to $n$-tuples $\mathbf{a}$ satisfying $r\leq
w(\mathbf{a})\leq s$. Then, if $\mathbf{a}$ and $\mathbf{b}$ are edges ($n$-tuples)) in
$W_k^{r,s}(n-1)$ that satisfy $w(\mathbf{a})=r$ and $w(\mathbf{b})=s$ and that differ in a single
position, then there are directed paths from $\mathbf{a}$ to $\mathbf{b}$ and from $\mathbf{b}$ to
$\mathbf{a}$ in $W_k^{r,s}(n-1)$.
\end{lemma}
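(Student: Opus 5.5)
Since $\mathbf{a}$ and $\mathbf{b}$ differ in exactly one position, write $\mathbf{a}=(a_0,\dots,a_{n-1})$ and suppose they differ only at position $j$, with $a_j=x$ and $b_j=y$. The plan is to construct explicitly a directed walk in $B_k(n-1)$ from edge $\mathbf{a}$ to edge $\mathbf{b}$ using only edges $\mathbf{c}$ with $r\leq w(\mathbf{c})\leq s$. A natural walk is obtained by rotating. Consecutive edges in $B_k(n-1)$ correspond to $n$-tuples where the second is the first shifted left by one with a new last symbol. So from $\mathbf{a}$ we traverse the edges
\[
(a_1,\dots,a_{n-1},a_0),\ (a_2,\dots,a_{n-1},a_0,a_1),\ \dots
\]
i.e.\ all cyclic shifts of $\mathbf{a}$. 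Every such edge is a permutation of $\mathbf{a}$, so by permutation-agnosticism it has weight $r$ and lies in $W_k^{r,s}(n-1)$.

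The key step is to time the single symbol change. After performing $j$ shifts we reach the edge $(a_j,a_{j+1},\dots,a_{n-1},a_0,\dots,a_{j-1})$, whose first entry is $x$. Shifting once more, we append $y$ instead of $x$, giving
\[
(a_{j+1},\dots,a_{n-1},a_0,\dots,a_{j-1},y).
\]
This is a cyclic shift of $\mathbf{b}$, hence has weight $s$ and lies in the subgraph. We then continue cyclically rotating this tuple, which gives cyclic shifts of $\mathbf{b}$, all of weight $s$. After $n-j-1$ further steps we arrive at $\mathbf{b}=(a_0,\dots,a_{j-1},y,a_{j+1},\dots,a_{n-1})$ itself.

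Each step is legitimate in the de Bruijn graph. The head vertex of one edge, namely its last $n-1$ entries, is the tail vertex of the next edge, namely its first $n-1$ entries. This holds because each new tuple is the old one with its first symbol dropped and a symbol appended. The reverse path from $\mathbf{b}$ to $\mathbf{a}$ is obtained symmetrically, swapping the roles of $x$ and $y$: rotate $\mathbf{b}$ through shifts of weight $s$, then append $x$ in place of $y$, then rotate through shifts of $\mathbf{a}$ of weight $r$.

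The only point needing care is the index bookkeeping, including the edge case $j=0$, where the change happens on the very first step. Beyond that, the argument relies on a single fact: all intermediate edges are permutations of either $\mathbf{a}$ or $\mathbf{b}$, so their weights are exactly $r$ or $s$ and lie in $[r,s]$. Note that the hypothesis $w(\mathbf{a})=r$, $w(\mathbf{b})=s$ is only used to guarantee membership in $W_k^{r,s}(n-1)$.
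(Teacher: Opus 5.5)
Your proposal is correct and follows essentially the same route as the paper's proof. You rotate through the cyclic shifts of $\mathbf{a}$ (all of weight $r$ by permutation-agnosticism) until the differing entry is at the front, append $b_j$ in place of $a_j$ to land on a cyclic shift of $\mathbf{b}$, rotate through shifts of $\mathbf{b}$ (all of weight $s$) back to $\mathbf{b}$, and handle the reverse direction symmetrically, with index bookkeeping that is slightly more explicit than the paper's.
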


\begin{proof}
Suppose $\mathbf{a}=(a_0,a_1\dots,a_{n-1})$ and $\mathbf{b}=(b_0,b_1\dots,b_{n-1})$, where
$a_i=b_i$ for every $i$ except $i=s$ for some $s$ ($0\leq s\leq n-1$).  Since $w$ is
permutation-agnostic, if $\mathbf{a}'$ is a cyclic shift of $\mathbf{a}$ then
$w(\mathbf{a})=w(\mathbf{a}')$.  Hence there is a directed path from $\mathbf{a}$ to
$(a_s,a_{s+1},\dots,a_{n-1},a_0,a_1,\dots,a_{s-1})$ in $W_k^{r,s}(n-1)$. Clearly
$(a_s,a_{s+1},\dots,a_{n-1},a_0,a_1,\dots,a_{s-1})$ is an incoming edge to the vertex
$(a_{s+1},\dots,a_{n-1},a_0,a_1,\dots,a_{s-1})$ and
$(a_{s+1},a_{s+2},\dots,a_{n-1},a_0,a_1,\dots,a_{s-1},b_s)=(b_{s+1},b_{s+2},\dots,b_{n-1},b_0,b_1,\dots,b_s)$
is an outgoing edge from this vertex.  That is, we can extend the directed path so that it goes
from $\mathbf{a}$ to a cyclic shift of $\mathbf{b}$. Using the same argument as previously,
there is a directed path from $(b_{s+1},b_{s+2},\dots,b_{n-1},b_0,b_1,\dots,b_s)$ to
$\mathbf{b}$ in $W_k^{r,s}(n-1)$. The same argument works in reverse, and the result follows.
\end{proof}

\section{Partitioning using Hamming weight}

We start by considering the case where the weight of an $n$-tuple is simply the number of non-zero
entries, i.e.\ using the function $w_h$ as specified in Definition~\ref{definition:Hamming_weight}.
We have a result very similar to that of Ruskey et al.\ \cite{Ruskey12}.  We first need the
following notation.

\begin{definition}[The Hamming weight subgraph]
Suppose $k\geq2$, $n\geq 1$ and $0\leq r\leq s\leq n$. We write $H_k^{r,s}(n-1)$ for the subgraph
of $B_k(n-1)$ consisting of the edges corresponding to $n$-tuples $\mathbf{a}$ satisfying $r\leq
w_h(\mathbf{a})\leq s$.
\end{definition}

\begin{remark}
$H_2^{r-1,r}(n-1)$ is what Ruskey et al.\ \cite{Ruskey12} refer to as $G(\mathbf{B}_{r-1}^r(n-1))$.
\end{remark}

The following two results are straightforward to establish.

\begin{lemma}  \label{lemma:H_balanced}
If $k\geq2$, $n\geq 2$ and $0\leq r\leq s\leq n$ then $H_k^{r,s}(n-1)$ is balanced.
\end{lemma}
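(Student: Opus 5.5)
The plan is to construct, for each vertex of $H_k^{r,s}(n-1)$, an explicit bijection between its incoming and outgoing edges. Vertices of $H_k^{r,s}(n-1)$ are $(n-1)$-tuples in $\mathbb{Z}_k^{n-1}$, possibly including ones with no incident edges, which are trivially balanced.

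Fix a vertex $\mathbf{v}=(v_0,\dots,v_{n-2})$. Its incoming edges in $B_k(n-1)$ are the $n$-tuples $(x,v_0,\dots,v_{n-2})$ for $x\in\mathbb{Z}_k$. Its outgoing edges are the $n$-tuples $(v_0,\dots,v_{n-2},y)$ for $y\in\mathbb{Z}_k$. The correspondence I will use is
\[
(x,v_0,\dots,v_{n-2})\mapsto(v_0,\dots,v_{n-2},x),
\]
which is a bijection between the incoming and outgoing edges of $\mathbf{v}$ in $B_k(n-1)$.

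The two tuples $(x,v_0,\dots,v_{n-2})$ and $(v_0,\dots,v_{n-2},x)$ are cyclic shifts of each other. Since $w_h$ is permutation-agnostic, they have the same Hamming weight; explicitly, both weights equal $w_h(\mathbf{v})+[x\neq0]$. Hence one satisfies $r\leq w_h\leq s$ exactly when the other does. The bijection therefore restricts to a bijection between the incoming and outgoing edges of $\mathbf{v}$ in $H_k^{r,s}(n-1)$.

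It follows that the in-degree of every vertex equals its out-degree, so $H_k^{r,s}(n-1)$ is balanced. I expect no real obstacle here. The only care needed is that balance concerns degrees only, not strong connectivity, and that loops such as $(0,\dots,0)$ are handled consistently: a loop contributes one to both the in-degree and the out-degree, and the bijection maps it to itself. The same argument works verbatim for any permutation-agnostic $w$.
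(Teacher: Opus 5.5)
Your proposal is correct and is essentially the paper's argument: the paper uses the same weight-preserving correspondence $(x,a_1,\dots,a_{n-1})\leftrightarrow(a_1,\dots,a_{n-1},x)$ to show that each $H_k^{t,t}(n-1)$ is balanced, and then takes the union over $r\leq t\leq s$. You instead apply the bijection directly to the whole weight range $[r,s]$, which is an inessential difference.
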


\begin{proof}
$H_k^{r,r}(n-1)$ is balanced for any $r$, since if $(a_1,a_2,\dots,a_{n-1})$ is a vertex in
$B_k(n-1)$ and $x\in \mathbb{Z}_k$, then $(x,a_1,a_2,\dots,a_{n-1})$ is an incoming edge in
$H_k^{r,r}(n-1)$ if and only if $(a_1,a_2,\dots,a_{n-1},x)$ is an outgoing edge in
$H_k^{r,r}(n-1)$.  But the set of edges in $H_k^{r,s}(n-1)$ is simply the union of the sets of
edges in $H_k^{t,t}(n-1)$ for every $t$ satisfying $r\leq t\leq s$, and the result follows.
\end{proof}

\begin{lemma}  \label{lemma:H_successor rule}
Suppose $k\geq2$, $n\geq 2$, $0\leq r\leq n$ and $(a_1,a_2,\dots,a_{n-1})$ is a vertex in
$H_k^{r,r}(n-1)$.  If $(a_0,a_1,\dots,a_{n-1})$ is an incoming edge to this vertex then
$(a_1,a_2,\dots,a_{n})$ is an outgoing edge if and only if either $a_0=a_n=0$ or $a_0$ and $a_n$
are both non-zero.
\end{lemma}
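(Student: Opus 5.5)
The statement is meant to be read with the incoming edge $(a_0,a_1,\dots,a_{n-1})$ already lying in $H_k^{r,r}(n-1)$, so that $w_h(a_0,a_1,\dots,a_{n-1})=r$. The question is then exactly which $(a_1,\dots,a_{n-1},a_n)$ also have Hamming weight $r$. The plan is to compare the two weights through the common middle block $(a_1,\dots,a_{n-1})$.

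The first step is to write $m=w_h(a_1,\dots,a_{n-1})$, the number of non-zero entries among $a_1,\dots,a_{n-1}$. Directly from Definition~\ref{definition:Hamming_weight}, we have $w_h(a_0,\dots,a_{n-1})=m+[a_0\neq0]$ and $w_h(a_1,\dots,a_n)=m+[a_n\neq0]$, where $[\cdot]$ is the indicator that is $1$ if the condition holds and $0$ otherwise. Consequently the outgoing edge has weight $r$ precisely when $m+[a_n\neq0]=m+[a_0\neq0]$, that is, when $[a_0\neq0]=[a_n\neq0]$.

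The second step is to observe that $[a_0\neq0]=[a_n\neq0]$ holds exactly when either both $a_0$ and $a_n$ are zero or both are non-zero. This is the claimed criterion.

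Finally, $(a_1,\dots,a_n)$ is always an outgoing edge of the vertex $(a_1,\dots,a_{n-1})$ in $B_k(n-1)$. Membership in $H_k^{r,r}(n-1)$ therefore depends only on its weight being $r$, and this gives both directions of the equivalence at once. No step here is a real obstacle; the only point needing care is stating explicitly that the incoming edge is assumed to have weight $r$, which is what makes the comparison meaningful.
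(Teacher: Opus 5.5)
Your proof is correct and is essentially the paper's own argument (both edges must have exactly $r$ non-zero entries). You simply make it explicit by comparing the two weights through the shared block $(a_1,\dots,a_{n-1})$. You also rightly note that the incoming edge must be assumed to lie in $H_k^{r,r}(n-1)$, which the statement leaves implicit.
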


\begin{proof}
This follows since all edges in $H_k^{r,r}(n-1)$ must correspond to $n$-tuples with precisely $r$
non-zero entries.
\end{proof}

Before giving the main result we need the following key lemma.

\begin{lemma}  \label{lemma:H_prelims}
Suppose $n\geq2$, $k\geq2$ and $0\leq r<n$.
\begin{itemize}
\item[(i)] If $\mathbf{a}$ is an edge (i.e.\ an $n$-tuple) in $H_k^{r,r+1}(n-1)$, then there is
    a directed path in $H_k^{r,r+1}(n-1)$ from $\mathbf{a}$ to an edge $\mathbf{a}'$ satisfying
    $w_h(\mathbf{a}')=r$.
\item[(ii)] If $\mathbf{b}$ is an edge (i.e.\ an $n$-tuple) in $H_k^{r,r+1}(n-1)$, then there
    is a directed path in $H_k^{r,r+1}(n-1)$ from an edge $\mathbf{b}'$ satisfying
    $w_h(\mathbf{b}')=r+1$ to $\mathbf{b}$.
\item[(iii)] If $\mathbf{a}=(a_0,a_1,\dots,a_{n-1})$ is an edge (i.e.\ an $n$-tuple) in
    $H_k^{r,r+1}(n-1)$ such that $w_h(\mathbf{a})=r$ and $a_i=0$ for some $i$, then there is a
    directed path in $H_k^{r,r+1}(n-1)$ from $\mathbf{a}$ to
\[ (a_0,a_1,\dots,a_{i-1},a_{i+1}, a_i,a_{i+2},\dots,a_{n-1}), \]
i.e.\ $\mathbf{a}$ modified so that $a_i$ and $a_{i+1}$ are interchanged.
\item[(iv)] If $\mathbf{b}=(b_0,b_1,\dots,b_{n-1})$ is an edge (i.e.\ an $n$-tuple) in
    $H_k^{r,r+1}(n-1)$ such that $w_h(\mathbf{b})=r+1$ and $b_i=0$ for some $i$, then there is
    a directed path in $H_k^{r,r+1}(n-1)$ from
\[ (b_0,b_1,\dots,b_{i-1},b_{i+1},b_{i},b_{i+2},\dots,b_{n-1}) \]
i.e.\ $\mathbf{b}$ modified so that $b_i$ and $b_{i+1}$ are interchanged, to $\mathbf{b}$.
\end{itemize}
\end{lemma}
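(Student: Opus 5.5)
The plan is to prove all four parts by explicit walks in $H_k^{r,r+1}(n-1)$. A walk from an edge $(c_0,c_1,\dots,c_{n-1})$ moves to an edge $(c_1,\dots,c_{n-1},x)$ for some $x\in\mathbb{Z}_k$. So each step drops the leading symbol and appends a new one, and the weight changes by $[x\neq0]-[c_0\neq0]$. I will also use, as in the proof of Lemma~\ref{lemma:one_change}, that appending $x=c_0$ gives a cyclic shift of the same weight. Hence any edge of $H_k^{r,r+1}(n-1)$ is joined in both directions to all of its cyclic shifts. This lets me work with tuples up to rotation and reduces each part to a short local computation. Indices are read modulo $n$ throughout.

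For (i), if $w_h(\mathbf{a})=r$ there is nothing to do. Otherwise $w_h(\mathbf{a})=r+1\geq1$, so $\mathbf{a}$ has a non-zero entry; let $j$ be the least index with $a_j\neq0$. I repeatedly append $0$. While the dropped symbol is $0$ the weight stays $r+1$. At step $j+1$ the symbol $a_j\neq0$ is dropped and a $0$ appended, giving an edge of weight exactly $r$.

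For (ii) I argue dually, walking backwards: the edge $(x,c_0,\dots,c_{n-2})$ is a predecessor of $(c_0,\dots,c_{n-1})$. If $w_h(\mathbf{b})=r+1$ we are done. Otherwise $w_h(\mathbf{b})=r<n$, so $\mathbf{b}$ has a zero entry. I repeatedly prepend a fixed non-zero symbol (for example $1$). The weight stays $r$ while the discarded last symbols are non-zero, and it becomes $r+1$ when the last zero of $\mathbf{b}$ is discarded. Reading this backward sequence forwards gives the required path into $\mathbf{b}$.

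For (iii), write $R=(a_{i+2},\dots,a_{n-1},a_0,\dots,a_{i-1})$ and $x=a_{i+1}$. First I rotate $\mathbf{a}$ to the weight-$r$ edge $(0,x,R)$. Next I append $x$ to get $(x,R,x)$, whose weight is $r+[x\neq0]\le r+1$. Then I append $0$ to get $(R,x,0)$, of weight $r$. This last edge is a cyclic shift of the swapped tuple, and rotating it gives the target.

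For (iv) I use the same $R$ and $x=b_{i+1}$. Starting from the swapped tuple of weight $r+1$, I rotate it to $(x,0,R)$. Appending $0$ gives $(0,R,0)$, of weight $r+1-[x\neq0]\in\{r,r+1\}$. Appending $x$ then gives $(R,0,x)$, which is a rotation of $\mathbf{b}$, and rotating it gives $\mathbf{b}$. In each case every edge visited has weight $r$ or $r+1$, so it lies in $H_k^{r,r+1}(n-1)$.

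The main obstacle is this weight bookkeeping in (iii) and (iv): the single intermediate edge must stay within $[r,r+1]$. That is exactly why the order of appended symbols differs between the two parts. The cyclic-index edge case $i=n-1$ is handled by the same rotation argument.
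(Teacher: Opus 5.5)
Your proposal is correct and is essentially the paper's proof. Parts (iii) and (iv) use exactly the same two-step moves (append $a_{i+1}$ then $0$; respectively append $0$ then $b_{i+1}$), bracketed by cyclic shifts and with the same weight bookkeeping, and your walks in (i) and (ii) are trivial variants of the paper's ``rotate, then change one entry'' argument: appending $0$ while zeros are dropped is itself a rotation, and prepending $1$ while non-zero symbols are dropped preserves Hamming weight. Your backward formulation of (ii) is slightly cleaner than the paper's, which starts the rotation at $(b_i,\dots,b_{i-1})$, although the weight-$(r+1)$ edge $(1,b_{i+1},\dots,b_{i-1})$ actually precedes the rotation $(b_{i+1},\dots,b_{i-1},b_i)$.
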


\begin{proof}
\begin{itemize}
\item[(i)] If $\mathbf{a}=(a_0,a_1,\dots,a_{n-1})$ satisfies $w_h(\mathbf{a})=r+1$, then
    suppose $a_i\neq0$ for some $i$ (which must exist since $w_h(\mathbf{a})>0$).  Now clearly
    there is a directed path in $H_k^{r,r+1}(n-1)$ from $\mathbf{a}$ to
    $(a_i,a_{i+1},\dots,a_{n-1},a_0,a_1,\dots,a_{i-1})$, simply by taking successive cyclic
    shifts. Because $w_h(\mathbf{a})=r+1$ and $a_i\neq0$, the edge
    $(a_{i+1},\dots,a_{n-1},a_0,a_1,\dots,a_{i-1},0)$ is an edge in $H_k^{r,r+1}(n-1)$ with
    weight $r$, and so a directed path exists in $H_k^{r,r+1}(n-1)$ from $\mathbf{a}$ to an
    edge with weight $r$. Alternatively, if $w_h(\mathbf{a})=r$, then simply set
    $\mathbf{a}=\mathbf{a}'$. Either way the result holds.

\item[(ii)] If $\mathbf{b}=(b_0,b_1,\dots,b_{n-1})$ satisfies $w_h(\mathbf{b})=r<n$, then there
    is at least one value $b_i$ such that $b_i=0$.  As in the previous case, there is a
    directed path in $H_k^{r,r+1}(n-1)$ from
    $(b_i,b_{i+1},\dots,b_{n-1},b_0,b_1,\dots,b_{i-1})$ to $\mathbf{b}$. Because
    $w_h(\mathbf{b})=r$, the edge $(1,b_{i+1},\dots,b_{n-1},b_0,b_1,\dots,b_{i-1})$ is an edge
    in $H_k^{r,r+1}(n-1)$ with Hamming weight $r+1$, and the desired directed path exists.
    Alternatively, if $w_h(\mathbf{b})=r+1$, then simply set $\mathbf{b}=\mathbf{b}'$.  Either
    way the result holds.

\item[(iii)] There is clearly a directed path from $\mathbf{a}$ to
    $(a_i,a_{i+1},\dots,a_{n-1},a_0,a_1,\dots,a_{i-1})$ in $H_k^{r,r+1}(n-1)$, simply by taking
    successive cyclic shifts. Since $\mathbf{a}$ has weight $r$ and $a_i=0$, the $n$-tuple
    \[(a_{i+1},a_{i+2},\dots,a_{n-1},a_0,a_1,\dots,a_{i-1},a_{i+1})\]
    has weight $r$ or $r+1$ and hence is in $H_k^{r,r+1}(n-1)$.  Clearly
    \[(a_{i+2},a_{i+3},\dots,a_{n-1},a_0,a_1,\dots,a_{i-1},a_{i+1},a_i)\]
    has weight the same as $\mathbf{a}$ and hence is also in $H_k^{r,r+1}(n-1)$. That is, the
    desired directed path exists.

\item[(iv)] There is a directed path from
    $(b_{i+2},b_{i+3},\dots,b_{n-1},b_0,b_1,\dots,b_{i+1})$ to $\mathbf{b}$ in
    $H_k^{r,r+1}(n-1)$, taking cyclic shifts.  Because $\mathbf{b}$ has weight $r+1$ and
    $b_i=0$, the $n$-tuple
    \[(b_i,b_{i+2},\dots,b_{n-1},b_0,b_1,\dots,b_i)\]
    has weight $r$ or $r+1$ and hence is in $H_k^{r,r+1}(n-1)$.  Also
    \[(b_{i+1},b_i,b_{i+2},\dots,b_{n-1},b_0,b_1,\dots,b_{i-1})\]
    has weight the same as $\mathbf{b}$ and so is likewise in $H_k^{r,r+1}(n-1)$.  This
    completes the proof.
\end{itemize}
\end{proof}

We can now give our main result.

\begin{theorem}  \label{theorem:Hamming_characterisation}
Suppose $k\geq2$ and $n\geq 2$. Then:
\begin{itemize}
\item[(i)] $H_k^{r,r}(n-1)$ is Eulerian if and only if $r\in\{0,1,n-1,n\}$;
\item[(ii)] $H_k^{r,r+1}(n-1)$ is Eulerian for every $r$ ($0\leq r<n$).
\end{itemize}
\end{theorem}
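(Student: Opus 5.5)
Since Lemma~\ref{lemma:H_balanced} shows every $H_k^{r,s}(n-1)$ is balanced, both parts reduce to strong connectivity of the graph formed by the edges. Throughout, ``Eulerian'' is read as a statement about the edge set: the vertices incident to no edge are discarded, as is implicit in the correspondence with cut-down de Bruijn sequences. For a balanced digraph it is enough to show that every edge can reach every other edge by a directed path.

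For (i), consider the four cases $r\in\{0,1,n-1,n\}$ in turn.

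When $r=0$ the only edge is the loop at the all-zero vertex, which is trivially Eulerian.

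When $r=n$ the edges are exactly the $n$-tuples with all entries non-zero. These form the de Bruijn graph on the alphabet $\mathbb{Z}_k\setminus\{0\}$, which is strongly connected. If $k=2$ it is a single loop.

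When $r=1$, an edge has a single non-zero entry $x$. Successive cyclic shifts move from $x0\cdots0$ to $0\cdots0x$. From the vertex $0\cdots0$ we can then leave by the edge $0\cdots0y$ for any $y\neq0$. Continuing the shifts, every weight-one tuple is reachable from every other.

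When $r=n-1$, an edge has exactly one zero. By Lemma~\ref{lemma:H_successor rule}, from $(a_0,\dots,a_{n-1})$ with $a_0=0$ the next symbol must be $0$, so the shift is forced. If instead $a_0\neq0$, any non-zero symbol may be appended. Shifting therefore brings the zero to the front, then to the back, and the non-zero symbols can be replaced freely as they re-enter. Writing this out carefully gives strong connectivity.

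For the converse, take $2\le r\le n-2$, so $n\ge4$. Exhibit two edges of weight $r$ lying in different strong components. The natural invariant comes from Lemma~\ref{lemma:H_successor rule}: along any path the pattern of zero and non-zero positions is merely rotated. So the binary necklace of the support pattern is invariant along every path. Since $2\le r\le n-2$, there are at least two binary necklaces of length $n$ and weight $r$, namely $1^r0^{n-r}$ and $1^{r-1}010^{n-r-1}$. Hence the graph is not strongly connected.

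For (ii), let $\mathbf{a},\mathbf{b}$ be any two edges. By Lemma~\ref{lemma:H_prelims}(i) we may move from $\mathbf{a}$ to some edge of weight $r$. By Lemma~\ref{lemma:H_prelims}(ii) some edge of weight $r+1$ leads to $\mathbf{b}$. It therefore suffices to show that every weight-$r$ edge $\mathbf{c}$ reaches every weight-$(r+1)$ edge $\mathbf{d}$. I would do this in two stages.

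First, normalise both ends. Repeated use of Lemma~\ref{lemma:H_prelims}(iii) swaps a zero with its right neighbour, which moves $\mathbf{c}$ to the tuple with the same non-zero symbols in the same order followed by $n-r$ zeros. This uses $r<n$, so a zero exists. Dually, Lemma~\ref{lemma:H_prelims}(iv) shows that some normalised tuple of weight $r+1$ reaches $\mathbf{d}$; here $\mathbf{d}$ has a zero provided $r+1<n$, and the case $r+1=n$ is handled by Lemma~\ref{lemma:one_change} directly.

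Second, connect the two normalised tuples by Lemma~\ref{lemma:one_change}, applied to pairs at weights $r$ and $r+1$ differing in a single position. Changing one symbol in the zero block passes from weight $r$ to weight $r+1$. Changing a non-zero symbol to a different non-zero symbol can be done by going up to weight $r+1$ and coming back down. Chaining these one-position changes lets us change any non-zero symbol and add the extra non-zero entry, so any normalised tuple of weight $r$ is connected to any normalised tuple of weight $r+1$.

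The main obstacle is the bookkeeping in this second stage. Lemma~\ref{lemma:one_change} only links a weight-$r$ tuple to a weight-$(r+1)$ tuple, so every change of symbol must be routed through alternating weights. A secondary difficulty is the non-Eulerian direction of (i), where the support-necklace invariant must be stated carefully.
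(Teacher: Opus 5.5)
Your proposal is correct and follows the paper's proof essentially step for step. In (i) you use the same four-case verification and the same invariant, namely the cyclic pattern of zero positions. In (ii) you use the same reduction via Lemma~\ref{lemma:H_prelims}(i)--(iv) to tuples with all zeros at the end, finished off with Lemma~\ref{lemma:one_change}.

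The only deviation is cosmetic. The paper relabels the non-zero symbols to $1$ inside the weight-$r$ layer using Lemma~\ref{lemma:H_successor rule}, and then applies Lemma~\ref{lemma:one_change} once. You instead route each symbol change through alternating weights $r$ and $r+1$ by repeated applications of Lemma~\ref{lemma:one_change}. This is valid, and it in fact shows that Lemma~\ref{lemma:one_change} alone links any weight-$r$ edge to any weight-$(r+1)$ edge, so your normalisation stage is not actually needed.
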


\begin{remark}
For the case $k=2$ this is precisely \cite[Theorem 2.4]{Ruskey12}.
\end{remark}

Observe that in the remainder of the paper we write $x^m$ as a shorthand for $m$ consecutive
repetitions of the symbol $x$.

\begin{proof}
Because of Lemma~\ref{lemma:H_balanced}, establishing the Eulerian property just requires us to
show that the subgraphs are strongly connected.
\begin{itemize}
\item[(i)] The result is trivially true for $r=0$, since there is a single directed edge with
    Hamming weight 0, the loop to and from the vertex $0^{n-1}$.  If $r=n$, then the edges in
    $H_k^{n,n}(n-1)$ are all the $n$-tuples that do not contain a zero. But this subgraph is
    isomorphic to the full de Bruijn graph $B_k(n-2)$, and hence it is clearly strongly
    connected.

Now suppose $r=1$.  The edges of $H_k^{1,1}(n-1)$ correspond to all $n$-tuples containing a
single non-zero entry.  Suppose $\mathbf{a}=(a_0,a_1,\dots,a_{n-1})$ and
$\mathbf{b}=(b_0,b_1,\dots,b_{n-1})$ are two such $n$-tuples, where $a_u\neq0$ and $b_v\neq0$
and all the other $a_i$ and $b_i$ values are zero. Then there is clearly a directed path in
$H_k^{1,1}(n-1)$ from $\mathbf{a}$ to the $n$-tuple $(a_u,0,0,\dots,0)$ involving taking
successive cyclic shifts of $\mathbf{a}$, and similarly there is also a directed path from the
$n$-tuple $(0,0,\dots,0,b_v)$ to $\mathbf{b}$. Also, $(a_u,0,0,\dots,0)$ is an incoming edge to
the vertex $0^{n-1}$ and $(0,0,\dots,0,b_v)$ is an outgoing edge from the same vertex. Hence
there is a directed path from $\mathbf{a}$ to $\mathbf{b}$ and we have established strong
connectivity.

Finally, a somewhat similar argument applies when $r=n-1$.  Suppose
$\mathbf{a}=(a_0,a_1,\dots,a_{n-1})$ and $\mathbf{b}=(b_0,b_1,\dots,b_{n-1})$ are two
$n$-tuples corresponding to edges in $H_k^{n-1,n-1}(n-1)$, where $a_u=b_v=0$ and all the other
$a_i$ and $b_i$ values are non-zero.  Then there are directed paths in $H_k^{n-1,n-1}(n-1)$:
\begin{itemize}
\item from $\mathbf{a}$ to the $n$-tuple
    $(a_{u+1},a_{u+2},\dots,a_{n-1},a_0,a_1,\dots,a_{u-1},0)$, just by taking cyclic
    shifts;
\item then from $(a_{u+1},a_{u+2},\dots,a_{n-1},a_0,a_1,\dots,a_{u-1},0)$ to
    \\$(0,b_{v+1},b_{v+2},\dots,b_{n-1},b_0,b_1,\dots,b_{v-1})$ (from repeated application of
    Lemma~\ref{lemma:H_successor rule}); and
\item finally from $(0,b_{v+1},b_{v+2},\dots,b_{n-1},b_0,b_1,\dots,b_{v-1})$ to
    $\mathbf{b}$, again taking cyclic shifts;
\end{itemize}
yielding the required directed path from $\mathbf{a}$ to $\mathbf{b}$.

Finally suppose $1<r<n-1$, and thus $n\geq4$ --- we need to show that $H_k^{r,r}(n-1)$ is not
strongly connected.  Regardless of $r$ there will exist two edges in $H_k^{r,r}(n-1)$
corresponding to the $n$-tuples $\mathbf{a}=(1^r,0^{n-r})$ and
$\mathbf{b}=(1,0,1^{r-1},0^{n-r-1})$, and all the constituent strings are non-empty since
$n\geq 4$ and $1<r<n-1$.  We claim that there is no path in $H_k^{r,r}(n-1)$ linking
$\mathbf{a}$ to $\mathbf{b}$.

This holds because, from Lemma~\ref{lemma:H_successor rule}, any edge on a path in
$H_k^{r,r}(n-1)$ starting from $\mathbf{a}$ must contain $r$ consecutive non-zero entries
(working cyclically). Similarly, any edges on a path starting from $\mathbf{b}$ must contain a
zero followed by $r-1$ consecutive non-zero entries and then another zero (again working
cyclically). That is $\mathbf{b}$ cannot be on a path starting at $\mathbf{a}$, and vice versa.

\item[(ii)] Suppose $\mathbf{a}$ and $\mathbf{b}$ are vertices in $H_k^{r,r+1}(n-1)$.  Then, by
    Lemma~\ref{lemma:H_prelims}(i) and (ii), there are directed paths in $H_k^{r,r+1}(n-1)$
    from $\mathbf{a}$ to $\mathbf{a}'$ and from $\mathbf{b}'$ to $\mathbf{b}$, where
    $\mathbf{a}'$ has weight $r$ and $\mathbf{b}'$ has weight $r+1$.  So to establish the
    result we simply need to show there is a path in $H_k^{r,r+1}(n-1)$ from any $n$-tuple with
    weight $r$ to any $n$-tuple with weight $r+1$.

Suppose $\mathbf{a}$ has weight $r$ and $\mathbf{b}$ has weight $r+1$, where $0\leq r<n$. Then
we claim there are directed paths in $H_k^{r,r+1}(n-1)$ from $\mathbf{a}$ to $(1^r,0^{n-r})$,
and from $(1^{r+1},0^{n-r-1})$ to $\mathbf{b}$. The first assertion follows first from repeated
application of Lemma~\ref{lemma:H_prelims}(iii) to $\mathbf{a}$ to give a path from
$\mathbf{a}$ to an edge whose first $r$ entries are non-zero and the last $n-r$ entries are
zero.  A path from this vertex to $(1^r,0^{n-r})$ exists from Lemma~\ref{lemma:H_successor
rule}.

An exactly analogous argument using repeated applications of
Lemmas~\ref{lemma:H_prelims}(iv) and \ref{lemma:H_successor rule} shows that there is a
directed path from $(1^{r+1},0^{n-r-1})$ to $\mathbf{b}$.  That is, to complete the proof
it remains to show that there is a directed path in $H_k^{r,r+1}(n-1)$ from
$(1^{r},0^{n-r})$ to $(1^{r+1},0^{n-r-1})$; but this is immediate since they differ in only
one digit and hence the result follows from Lemma~\ref{lemma:one_change}.

\end{itemize}
\end{proof}

\begin{remark}
A simpler somewhat less formal approach to proving part (i) of the above theorem is to observe that
the $n$-tuples corresponding to edges in $H_k^{r,r}(n-1)$ all have the same `pattern' of
occurrences of zeros, up to cyclic shift.  This follows from Lemma~\ref{lemma:H_successor rule}.
\end{remark}

Theorem~\ref{theorem:Hamming_characterisation} has the following immediate corollary.

\begin{corollary}  \label{corollary:H_characterisation}
Suppose $k\geq2$, $n\geq 2$ and $0\leq r<s\leq n$. Then $H_k^{r,s}(n-1)$ is Eulerian.
\end{corollary}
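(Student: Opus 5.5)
By Lemma~\ref{lemma:H_balanced}, $H_k^{r,s}(n-1)$ is balanced, so all that remains is to show it is strongly connected. The idea is to write $H_k^{r,s}(n-1)$ as the union of the consecutive-weight subgraphs $H_k^{t,t+1}(n-1)$ for $r\leq t<s$, use Theorem~\ref{theorem:Hamming_characterisation}(ii) on each piece, and then glue the pieces together.

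Each $H_k^{t,t+1}(n-1)$ with $r\leq t\leq s-1$ is Eulerian by Theorem~\ref{theorem:Hamming_characterisation}(ii), and hence strongly connected. Its edges are exactly the $n$-tuples of Hamming weight $t$ or $t+1$, so it is a subgraph of $H_k^{r,s}(n-1)$. Any directed path inside $H_k^{t,t+1}(n-1)$ is therefore also a directed path in $H_k^{r,s}(n-1)$.

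To glue, take two edges $\mathbf{a},\mathbf{b}$ of $H_k^{r,s}(n-1)$ with $w_h(\mathbf{a})=u$ and $w_h(\mathbf{b})=v$. Consecutive pieces $H_k^{t-1,t}(n-1)$ and $H_k^{t,t+1}(n-1)$ share every edge of weight $t$, which is a nonempty set since $0\leq t\leq n$. So we can choose edges $\mathbf{c}_t$ of weight $t$ for $r\leq t\leq s$, taking $\mathbf{c}_u=\mathbf{a}$ and $\mathbf{c}_v=\mathbf{b}$. For each $t$, both $\mathbf{c}_t$ and $\mathbf{c}_{t+1}$ lie in $H_k^{t,t+1}(n-1)$, so there are directed paths between them in both directions. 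Concatenating these paths along the chain of weights from $u$ to $v$ gives a directed path from $\mathbf{a}$ to $\mathbf{b}$ inside $H_k^{r,s}(n-1)$; the reverse direction is symmetric. Connectivity between edges is what we need here: every vertex of the subgraph is incident to some edge, so we may treat the subgraph as edge-induced.

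There is no real obstacle. The one point to check is that the hypothesis $r<s$ guarantees at least one piece exists, and that every weight in $[r,s]$ is realised by some edge. This fails for $r=s$, which is why $H_k^{r,r}(n-1)$ is excluded, in line with Theorem~\ref{theorem:Hamming_characterisation}(i).
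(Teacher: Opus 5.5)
Your proof is correct and follows essentially the same route as the paper. Balance comes from Lemma~\ref{lemma:H_balanced}, and strong connectivity comes from chaining edges of consecutive weights through the pieces $H_k^{t,t+1}(n-1)$, each strongly connected by Theorem~\ref{theorem:Hamming_characterisation}(ii). One small point: your choice $\mathbf{c}_u=\mathbf{a}$, $\mathbf{c}_v=\mathbf{b}$ is inconsistent when $u=v$ and $\mathbf{a}\neq\mathbf{b}$. That case is immediate, though, since both edges lie in a single piece $H_k^{t,t+1}(n-1)$ with $t=u$ (or $t=u-1$ if $u=s$), which is how the paper handles it.
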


\begin{proof}
The fact that $H_k^{r,s}(n-1)$ is balanced is immediate from Lemma~\ref{lemma:H_balanced}. We know
that the set of edges in $H_k^{r,s}(n-1)$ is the union of the sets of edges in $H_k^{t,t+1}(n-1)$
for every $t$ satisfying $r\leq t<s$, and $H_k^{t,t+1}(n-1)$ is strongly connected from
theorem~\ref{theorem:Hamming_characterisation}(ii).  So suppose $\mathbf{a}_t$ and $\mathbf{a}_u$
are edges in $H_k^{r,s}(n-1)$ of weights $t$ and $u$ respectively.  Without loss of generality
suppose $t\leq u$.  If $t=u$ or $t=u+1$ then both $\mathbf{a}_t$ and $\mathbf{a}_u$ are in
$H_k^{t,t+1}(n-1)$, and hence there is a directed path from $\mathbf{a}_t$ to $\mathbf{a}_u$. So
suppose $u>t+1$.  Then let $\mathbf{a}_{t+1},\mathbf{a}_{t+2},\dots,\mathbf{a}_{u-1}$ be arbitrary
edges of weights $t+1,t+2,\dots,u-1$ respectively.  Then since $H_k^{i,i+1}(n-1)$ is strongly
connected for every $i$, there exist directed paths in $H_k^{r,s}(n-1)$ from $\mathbf{a}_i$ to
$\mathbf{a}_{i+1}$ for every $i$ ($u\leq i<t$), and hence from $\mathbf{a}_t$ to $\mathbf{a}_u$. It
follows that $H_k^{r,s}(n-1)$ is strongly connected.
\end{proof}

\section{Partitioning using digit sum}

The second `measure' for $n$-tuples that we consider is the digit sum, as in
Definition~\ref{definition:digit_sum}.  For this case we need the following.

\begin{definition}[The digit sum subgraph]
Suppose $k\geq2$, $n\geq 2$ and $0\leq r\leq s\leq n(k-1)$. We write $S_k^{r,s}(n-1)$ for the
subgraph of $B_k(n-1)$ consisting of the edges corresponding to $n$-tuples $\mathbf{a}$ satisfying
$r\leq w_s(\mathbf{a})\leq s$.
\end{definition}

A special case of the problem we address here has been previously considered for this
function.

\begin{result}[\cite{Gabric20}]  \label{result:old_S_lemma}
Suppose $k\geq2$ and $n\geq 2$. Then $S_k^{0,s}(n-1)$ and $S_k^{s,n(k-1)}(n-1)$ are Eulerian,
for every $s$ ($0\leq s\leq n(k-1)$).
\end{result}

In fact a much more general result can be established.  Analogously to
Lemmas~\ref{lemma:H_balanced} and \ref{lemma:H_successor rule} we first have the following.

\begin{lemma}  \label{lemma:S_balanced}
If $k\geq2$, $n\geq 2$ and $0\leq r\leq s\leq n(k-1)$ then $S_k^{r,s}(n-1)$ is balanced.
\end{lemma}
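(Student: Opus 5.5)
The plan is to copy the proof of Lemma~\ref{lemma:H_balanced}, replacing Hamming weight by digit sum. The balance argument uses only that $w_s$ is permutation-agnostic. Indeed it uses only the weaker fact that $w_s$ is invariant under cyclic shifts.

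First I will show that $S_k^{t,t}(n-1)$ is balanced for every integer $t$ with $0\leq t\leq n(k-1)$. Fix a vertex $\mathbf{v}=(a_1,a_2,\dots,a_{n-1})$ of $B_k(n-1)$. For $x\in\mathbb{Z}_k$, the incoming edge $(x,a_1,\dots,a_{n-1})$ and the outgoing edge $(a_1,\dots,a_{n-1},x)$ are cyclic shifts of one another. Both therefore have digit sum $x+\sum_{i=1}^{n-1}a_i$, computed in $\mathbb{Z}$. Hence the first lies in $S_k^{t,t}(n-1)$ if and only if the second does. The map $x\mapsto x$ is then a bijection between the incoming and outgoing edges at $\mathbf{v}$ within $S_k^{t,t}(n-1)$, so in-degree equals out-degree at every vertex.

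Next I will observe that the edge set of $S_k^{r,s}(n-1)$ is the disjoint union of the edge sets of $S_k^{t,t}(n-1)$ for integers $t$ with $r\leq t\leq s$. This holds because each edge has exactly one digit sum, and that sum is an integer. The in-degree of a vertex in $S_k^{r,s}(n-1)$ is therefore the sum over $t$ of its in-degrees in $S_k^{t,t}(n-1)$, and likewise for out-degrees. Summing the equalities from the first step gives the result.

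There is no real obstacle here. The only point needing care is that the vertex set is taken to be that of $B_k(n-1)$, or equivalently the vertices incident to some edge. Under either convention, a vertex meeting no edge of $S_k^{r,s}(n-1)$ has in-degree and out-degree both $0$, so the balance condition holds there trivially.
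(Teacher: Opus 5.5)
Your proposal is correct and follows the paper's proof. The paper also gets balance of $S_k^{t,t}(n-1)$ from the fact that $(x,a_1,\dots,a_{n-1})$ and $(a_1,\dots,a_{n-1},x)$ have equal digit sum, and then takes the union over $r\leq t\leq s$; your extra remarks on disjointness and on the vertex-set convention just make explicit what the paper leaves implicit.
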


\begin{proof}
The fact that $S_k^{r,r}(n-1)$ is balanced for any $r$ is immediate, since if
$(a_1,a_2,\dots,a_{n-1})$ is a vertex in $B_k(n-1)$ and $x\in \mathbb{Z}_k$, then
$(x,a_1,a_2,\dots,a_{n-1})$ is an incoming edge in $S_k^{r,r}(n-1)$ if and only if
$(a_1,a_2,\dots,a_{n-1},x)$ is an outgoing edge in $S_k^{r,r}(n-1)$.  But the set of edges in
$S_k^{r,s}(n-1)$ is simply the union of the sets of edges in $S_k^{t,t}(n-1)$ for every $t$
satisfying $r\leq t\leq s$, and the result follows.
\end{proof}

\begin{lemma}  \label{lemma:S_successor rule}
Suppose $k\geq2$, $n\geq 2$, $0\leq r\leq n(k-1)$ and $(a_1,a_2,\dots,a_{n-1})$ is a vertex in
$S_k^{r,r}(n-1)$. If $(a_0,a_1,\dots,a_{n-1})$ is an incoming edge to this vertex then
$(a_1,a_2,\dots,a_{n})$ is an outgoing edge if and only if $a_0=a_n$.
\end{lemma}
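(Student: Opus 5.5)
The plan is to prove this directly from the definition of the digit sum, in the same way as Lemma~\ref{lemma:H_successor rule}. The key observation is that in $S_k^{r,r}(n-1)$ every edge is an $n$-tuple with digit sum exactly $r$. Suppose $(a_0,a_1,\dots,a_{n-1})$ is an incoming edge to the vertex $(a_1,\dots,a_{n-1})$. Then
\[ a_0+\sum_{i=1}^{n-1}a_i = r. \]
Any candidate outgoing edge $(a_1,\dots,a_{n-1},a_n)$ is an edge of $S_k^{r,r}(n-1)$ precisely when
\[ \sum_{i=1}^{n-1}a_i + a_n = r. \]

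I will subtract the two equations, working in $\mathbb{Z}$ with each $a_i$ treated as an integer in $[0,k-1]$. This shows the second condition holds if and only if $a_n=a_0$ as integers. Since both values lie in $[0,k-1]$, this is the same as $a_n=a_0$ in $\mathbb{Z}_k$.

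For the forward direction: if $(a_1,\dots,a_n)$ is an outgoing edge, the two sum conditions hold together, and subtracting gives $a_0=a_n$. For the converse: if $a_0=a_n$, the outgoing tuple has the same common part $a_1,\dots,a_{n-1}$ and the same remaining digit as the incoming edge. Its digit sum is therefore $r$, so it lies in $S_k^{r,r}(n-1)$; it is adjacent to the vertex by the definition of $B_k(n-1)$.

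There is no real obstacle. The only point to state explicitly is that the digit sum is computed in $\mathbb{Z}$ rather than modulo $k$, which is what lets the cancellation identify $a_0$ and $a_n$ uniquely. Under Hamming weight the analogous cancellation only gives "both zero or both non-zero".
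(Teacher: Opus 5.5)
Your proposal is correct and takes the same approach as the paper. The paper's proof is the one-line observation that every edge of $S_k^{r,r}(n-1)$ has digit sum exactly $r$. Your subtraction of the two sums, computed in $\mathbb{Z}$, simply spells that observation out.
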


\begin{proof}
This is immediate since all edges in $S_k^{r,r}(n-1)$ correspond to $n$-tuples with digit sum $r$.
\end{proof}

Since $w_s(\mathbf{a})=w_h(\mathbf{a})$ if $k=2$, Theorem~\ref{theorem:Hamming_characterisation}
applies in this case.  Thus we restrict our attention here to the case $k>2$.  Before giving the
main result we need the following key lemma.

\begin{lemma}  \label{lemma:S_prelims}
Suppose $n\geq2$, $k>2$ and $0\leq r<n(k-1)$.
\begin{itemize}
\item[(i)] If $\mathbf{a}$ is an edge (i.e.\ an $n$-tuple) in $S_k^{r,r+1}(n-1)$, then there is
    a directed path in $S_k^{r,r+1}(n-1)$ from $\mathbf{a}$ to an edge $\mathbf{a}'$ satisfying
    $w_s(\mathbf{a}')=r$.
\item[(ii)] If $\mathbf{b}$ is an edge (i.e.\ an $n$-tuple) in $S_k^{r,r+1}(n-1)$, then there
    is a directed path in $S_k^{r,r+1}(n-1)$ from an edge $\mathbf{b}'$ satisfying
    $w_s(\mathbf{b}')=r+1$ to $\mathbf{b}$.
\item[(iii)] If $\mathbf{a}=(a_0,a_1,\dots,a_{n-1})$ is an edge (i.e.\ an $n$-tuple) in
    $S_k^{r,r+1}(n-1)$ such that $w_s(\mathbf{a})=r$, $a_i<k-1$ and $a_{i+1}>0$, for some $i$,
    then there is a directed path in $S_k^{r,r+1}(n-1)$ from $\mathbf{a}$ to
\[ (a_0,a_1,\dots,a_{i-1},a_i+1, a_{i+1}-1,a_{i+2},\dots,a_{n-1}), \]
i.e.\ $\mathbf{a}$ modified so that $a_i$ is increased by 1 and $a_{i+1}$ decreased by 1.
\item[(iv)] If $\mathbf{b}=(b_0,b_1,\dots,b_{n-1})$ is an edge (i.e.\ an $n$-tuple) in
    $S_k^{r,r+1}(n-1)$ such that $w_s(\mathbf{b})=r+1$, $b_i<k-1$ and $b_{i+1}>0$, for some
    $i$, then there is a directed path in $S_k^{r,r+1}(n-1)$ from
\[ (b_0,b_1,\dots,b_{i-1},b_i+1,b_{i+1}-1,b_{i+2},\dots,b_{n-1}) \]
i.e.\ $\mathbf{b}$ modified so that $b_i$ is increased by 1 and $b_{i+1}$ decreased by 1, to
$\mathbf{b}$.
\end{itemize}
\end{lemma}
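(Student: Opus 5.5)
The plan is to mirror the proof of Lemma~\ref{lemma:H_prelims}, using cyclic shifts (which preserve $w_s$, since $w_s$ is permutation-agnostic) together with the observation that an edge of $S_k^{r,r+1}(n-1)$ can be followed by any edge whose digit sum is still $r$ or $r+1$. Throughout, indices are taken modulo $n$. A path that repeatedly appends the symbol just dropped from the front stays inside $S_k^{t,t}(n-1)$ by Lemma~\ref{lemma:S_successor rule}. Hence from any edge we can reach, and be reached from, any of its cyclic shifts.

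For (i), if $w_s(\mathbf{a})=r$ we take $\mathbf{a}'=\mathbf{a}$. Otherwise $w_s(\mathbf{a})=r+1>0$, so some $a_i>0$. We cyclically shift until $a_i$ is the first entry. We then append $a_i-1$ in place of $a_i$, giving the edge $(a_{i+1},\dots,a_{i-1},a_i-1)$, which has digit sum $r$. For (ii) the argument is dual. If $w_s(\mathbf{b})=r$, then since $r<n(k-1)$ some $b_i<k-1$. The edge $(b_i+1,b_{i+1},\dots,b_{i-1})$ has sum $r+1$ and precedes $(b_{i+1},\dots,b_{i-1},b_i)$. From there we shift cyclically to reach $\mathbf{b}$.

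For (iii), we start from $\mathbf{a}$ and shift cyclically to reach $(a_i,a_{i+1},\dots,a_{i-1})$. Next we append $a_i+1$, which is allowed because $a_i<k-1$; the resulting edge $(a_{i+1},\dots,a_{i-1},a_i+1)$ has sum $r+1$. We then append $a_{i+1}-1$, which is allowed because $a_{i+1}>0$; the edge $(a_{i+2},\dots,a_{i-1},a_i+1,a_{i+1}-1)$ has sum $r$ and is a cyclic shift of the target. Finally we shift cyclically back to the target $n$-tuple. Part (iv) is the reverse construction. Starting from the modified tuple $\mathbf{b}^*$ (with $b_i+1,b_{i+1}-1$), we shift to $(b_i+1,b_{i+1}-1,b_{i+2},\dots,b_{i-1})$. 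We then append $b_i$, giving sum $r$, and then $b_{i+1}$, giving $(b_{i+2},\dots,b_{i-1},b_i,b_{i+1})$ with sum $r+1$, a cyclic shift of $\mathbf{b}$. Shifting cyclically then reaches $\mathbf{b}$.

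The only delicate step is the bookkeeping in (iii) and (iv). We must check that each intermediate edge genuinely shares its $(n-1)$-tuple overlap with the preceding one. We must also check that each intermediate digit sum lies in $\{r,r+1\}$ and that every symbol lies in $[0,k-1]$. The hypotheses $a_i<k-1$ and $a_{i+1}>0$ are exactly what guarantee this. The edge case $n=2$, where the window wraps onto itself, should also be checked, though the construction appears to go through unchanged.
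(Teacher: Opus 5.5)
Your proposal is correct and follows the paper's proof essentially step for step. In all four parts you cyclically shift to bring the relevant entry to the front, append one or two modified symbols so the digit sum stays in $\{r,r+1\}$, and shift back, with the same intermediate edges as the paper. In (ii) you state explicitly that $(b_i+1,b_{i+1},\dots,b_{i-1})$ precedes $(b_{i+1},\dots,b_{i-1},b_i)$, which makes precise a splice the paper leaves implicit. You left the $n=2$ case as something to check; it does go through unchanged.
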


\begin{proof}
\begin{itemize}
\item[(i)] If $\mathbf{a}=(a_0,a_1,\dots,a_{n-1})$ satisfies $w_s(\mathbf{a})=r+1$, then
    suppose $a_i\neq0$ for some $i$ (which must exist since $w_s(\mathbf{a})>0$).  Now using
    cyclic shifting there is a directed path in $S_k^{r,r+1}(n-1)$ from $\mathbf{a}$ to
    $(a_i,a_{i+1},\dots,a_{n-1},a_0,a_1,\dots,a_{i-1})$. Because $w_s(\mathbf{a})=r+1$, the
    edge $(a_{i+1},\dots,a_{n-1},a_0,a_1,\dots,a_{i-1},a_{i}-1)$ is an edge in
    $S_k^{r,r+1}(n-1)$ with digit sum $r$, and so a directed path exists in $S_k^{r,r+1}(n-1)$
    from $\mathbf{a}$ to an edge with digit sum $r$. Alternatively, if $w_s(\mathbf{a})=r$,
    then simply set $\mathbf{a}=\mathbf{a}'$. Either way the result holds.
\item[(ii)] If $\mathbf{b}=(b_0,b_1,\dots,b_{n-1})$ satisfies $w_s(\mathbf{b})=r<n(k-1)$, then
    there is at least one value $b_i$ such that $b_i<k-1$.  As in the previous case, using
    cyclic shifting there is a directed path in $S_k^{r,r+1}(n-1)$ from
    $(b_i,b_{i+1},\dots,b_{n-1},b_0,b_1,\dots,b_{i-1})$ to $\mathbf{b}$. Because
    $w_s(\mathbf{b})=r$, the edge $(b_i+1,b_{i+1},\dots,b_{n-1},b_0,b_1,\dots,b_{i-1})$ is an
    edge in $S_k^{r,r+1}(n-1)$ with digit sum $r+1$, and the desired directed path exists.
    Alternatively, if $w_s(\mathbf{b})=r+1$, then simply set $\mathbf{b}=\mathbf{b}'$.  Either
    way the result holds.
\item[(iii)] There is clearly a directed path from $\mathbf{a}$ to
    $(a_i,a_{i+1},\dots,a_{n-1},a_0,a_1,\dots,a_{i-1})$ in $S_k^{r,r+1}(n-1)$ simply from
    cyclic shifting. Since $\mathbf{a}$ has digit sum $r$, the $n$-tuple
    \[(a_{i+1},a_{i+2},\dots,a_{n-1},a_0,a_1,\dots,a_{i-1},a_i+1)\]
    has digit sum $r+1$ and hence is in $S_k^{r,r+1}(n-1)$.  It follows that
    \[(a_{i+2},a_{i+3},\dots,a_{n-1},a_0,a_1,\dots,a_{i-1},a_i+1,a_{i+1}-1)\]
    has digit sum $r$ and hence is also in $S_k^{r,r+1}(n-1)$. Finally, there is a directed
    path from this latter $n$-tuple to
    \[ (a_0,a_1,\dots,a_{i-1},a_i+1, a_{i+1}-1,a_{i+2},\dots,a_{n-1}), \]
    via cyclic shifting and so the desired directed path exists.
\item[(iv)] There is a directed path from
    $(b_{i+2},b_{i+3},\dots,b_{n-1},b_0,b_1,\dots,b_{i+1})$ to $\mathbf{b}$ in
    $S_k^{r,r+1}(n-1)$ using cyclic shifting.  Because  $\mathbf{b}$ has digit sum $r+1$, the
    $n$-tuple
\[(b_{i+1}-1,b_{i+2},\dots,b_{n-1},b_0,b_1,\dots,b_i)\]
has digit sum $r$ and hence is in $S_k^{r,r+1}(n-1)$.  Hence
\[(b_i+1,b_{i+1}-1,b_{i+2},\dots,b_{n-1},b_0,b_1,\dots,b_{i-1})\]
has digit sum $r+1$ and so is likewise in $S_k^{r,r+1}(n-1)$. Finally, there is a directed path
    from
    \[ (b_0,b_1,\dots,b_{i-1},b_i+1, b_{i+1}-1,b_{i+2},\dots,b_{n-1}), \]
    to this latter $n$-tuple via cyclic shifting and so the desired directed path exists.  This
    completes the proof.
\end{itemize}
\end{proof}

We can now give the following result.

\begin{theorem}  \label{theorem:S_characteristic}
Suppose $n\geq2$ and $k>2$. Then:
\begin{itemize}
\item[(i)] $S_k^{r,r}(n-1)$ is Eulerian if and only if $r\in\{0,1,(k-1)n-1,(k-1)n\}$;
\item[(ii)] $S_k^{r,r+1}(n-1)$ is Eulerian for every $r$ ($0\leq r<(k-1)n$).
\end{itemize}
\end{theorem}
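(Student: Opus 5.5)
The plan is to handle the two parts separately, following the proof of Theorem~\ref{theorem:Hamming_characterisation}. By Lemma~\ref{lemma:S_balanced} every $S_k^{r,s}(n-1)$ is balanced, so in both parts only strong connectivity needs to be examined.

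\emph{Part (i).} I will use Lemma~\ref{lemma:S_successor rule}. If $(a_0,\dots,a_{n-1})$ is an edge of $S_k^{r,r}(n-1)$, its only possible successor edge is $(a_1,\dots,a_{n-1},a_0)$, which is its cyclic shift. Hence the edges reachable from a given $n$-tuple are exactly its cyclic shifts. So $S_k^{r,r}(n-1)$ is a disjoint union of cycles, one for each necklace of digit sum $r$. It is therefore strongly connected if and only if all $n$-tuples of digit sum $r$ lie in a single necklace.

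For $r=0,1,(k-1)n-1,(k-1)n$ the tuples of digit sum $r$ are the cyclic shifts of $0^n$, $(1,0^{n-1})$, $(k-2,(k-1)^{n-1})$ and $(k-1)^n$ respectively, so each case gives a single necklace. For $2\le r\le (k-1)n-2$ I will exhibit two $n$-tuples of digit sum $r$ whose multisets of entries differ; such tuples cannot be cyclic shifts of each other.
\begin{itemize}
\item[] The first is the \emph{greedy} tuple $g_r=((k-1)^q,t,0^{n-q-1})$, where $r=q(k-1)+t$ with $0\le t<k-1$ (for $q=n$ this is just $(k-1)^n$).
\item[] The second is a \emph{balanced} tuple, whose entries all lie in $\{c,c+1\}$ for some $c$.
\end{itemize}
The check that these two multisets genuinely differ uses $k>2$. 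The greedy multiset can equal a balanced one only if its entries lie in two consecutive values. Given the range of $r$, this would force either entries in $\{0,1\}$ with at least two $1$s (impossible, since greedy has at most one entry strictly between $0$ and $k-1$, and $k-1\ge2$), or the symmetric situation near $(k-1)n$. Both are excluded because $2\le r\le (k-1)n-2$. To halve the casework I may use the complementation $a_i\mapsto k-1-a_i$, which maps digit sum $r$ to $(k-1)n-r$ and preserves the necklace structure; with it I can assume $r\le (k-1)n/2$.

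\emph{Part (ii).} I will mirror the Hamming argument. By Lemma~\ref{lemma:S_prelims}(i),(ii), it suffices to find a path from any tuple $\mathbf{a}$ of digit sum $r$ to any tuple $\mathbf{b}$ of digit sum $r+1$. Starting from $\mathbf{a}$, I repeatedly apply Lemma~\ref{lemma:S_prelims}(iii) at positions $i<n-1$ with $a_i<k-1$ and $a_{i+1}>0$. Each application strictly increases the tuple lexicographically, so the process terminates. It stops exactly when there is no such $i$, and that is precisely the greedy form $g_r$. Symmetrically, repeated use of Lemma~\ref{lemma:S_prelims}(iv) gives a path from $g_{r+1}$ to $\mathbf{b}$.

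Finally, $g_r$ and $g_{r+1}$ differ in a single position, namely position $q$. Either $t<k-2$, so $t$ becomes $t+1$, or $t=k-2$, so $g_{r+1}=((k-1)^{q+1},0^{n-q-1})$. Lemma~\ref{lemma:one_change} then joins $g_r$ to $g_{r+1}$.

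The main obstacle is the non-connectivity half of (i): choosing two tuples that are clearly in different necklaces uniformly for all $2\le r\le(k-1)n-2$. The greedy-versus-balanced comparison, using $k>2$ and the complement symmetry, is what I would try first. If it turns out awkward, I would instead use $(2,0^{n-1})$ versus $(1,1,0^{n-2})$ and pad both tuples identically.
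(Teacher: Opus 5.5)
Your proposal is correct and follows essentially the same route as the paper. Both arguments get balance from Lemma~\ref{lemma:S_balanced}. For (i), both decompose $S_k^{r,r}(n-1)$ into vertex-disjoint necklace cycles via Lemma~\ref{lemma:S_successor rule}. For (ii), both use Lemma~\ref{lemma:S_prelims} to reduce to the greedy tuples $((k-1)^q,t,0^{n-q-1})$ and $g_{r+1}$, which are then joined by Lemma~\ref{lemma:one_change}.

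The only differences are small gains in rigour. You show there are two distinct necklaces for $2\le r\le (k-1)n-2$ by an explicit greedy-versus-balanced multiset comparison, where the paper simply asserts there are more than $n$ such tuples. You also make termination of the repeated moves explicit, with $i<n-1$ non-cyclic and each move strictly increasing the tuple lexicographically.
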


\begin{proof}
\begin{itemize}
\item[(i)] If $r=0$ or $r=(k-1)n$ then the only $n$-tuple with digit sum $r$ is $0^n$ or
    $(k-1)^n$, respectively.  The result follows.  Somewhat similarly, the only $n$-tuple with
    digit sum $r=1$ consists of $n-1$ zeros and a single 1 symbol.  Clearly this set of $n$
    $n$-tuples can be arranged in a single cycle --- a necklace.  A precisely analogous
    argument holds for $r=(k-1)n-1$.

    Now suppose $1<r<n-1$, and hence $n\geq4$.  By Lemma~\ref{lemma:S_successor rule}, the only
cycles in $S_k^{r,r}(n-1)$ consist of necklaces, and thus have period dividing $n$.  However,
if $1<r<(k-1)n-1$ then $S_k^{r,r}(n-1)$ will clearly contain more than $n$ edges ($n$-tuples),
and hence it contains more than one necklace.  However, two necklaces in $S_k^{r,r}(n-1)$
cannot share a vertex since, because $w_s$ is one-to-one, there is only at most one edge of
weight $r$ outgoing from (and incoming to) any vertex. The desired result follows.
\item[(ii)] Just as previously, because of Lemma~\ref{lemma:S_balanced}, establishing the
    Eulerian property just requires us to show that the subgraphs are strongly connected.
    Suppose $\mathbf{a}$ and $\mathbf{b}$ are vertices in $S_k^{r,r+1}(n-1)$.  Then, by
    Lemma~\ref{lemma:S_prelims}(i) and (ii), there are directed paths in $S_k^{r,r+1}(n-1)$
    from $\mathbf{a}$ to $\mathbf{a}'$ and from $\mathbf{b}'$ to $\mathbf{b}$, where
    $\mathbf{a}'$ has digit sum $r$ and $\mathbf{b}'$ has digit sum $r+1$.  So to establish the
    result we simply need to show there is a path in $S_k^{r,r+1}(n-1)$ from any $n$-tuple with
    digit sum $r$ to any $n$-tuple with digit sum $r+1$.

Suppose $\mathbf{a}$ has digit sum $r$ and $\mathbf{b}$ has digit sum $r+1$, where
$r=s(k-1)+t$, $s\geq 0$, and $0\leq t<k-1$. Then we claim there are directed paths in
$S_k^{r,r+1}(n-1)$ from $\mathbf{a}$ to $((k-1)^s,t,0^{n-s-1})$, and from
$((k-1)^s,t+1,0^{n-s-1})$ to $\mathbf{b}$.  The first assertion follows from repeated
application of Lemma~\ref{lemma:S_prelims}(iii) to $\mathbf{a}$ until an $n$-tuple
$(c_0,c_1,\dots,c_{n-1})$ of digit sum $r$ is reached for which there is no $i$ such that
$c_i<k-1$ and $c_{i+1}>0$.  It follows that $\mathbf{c}=((k-1)^s,t,0^{n-s-1})$, and the
first claim is established.  An exactly analogous argument using repeated applications of
Lemma~\ref{lemma:S_prelims}(iv) shows that there is a directed path from
$((k-1)^s,t+1,0^{n-s-1})$ to $\mathbf{b}$.  That is, to complete the proof it remains to
show that there is a directed path in $S_k^{r,r+1}(n-1)$ from $((k-1)^s,t,0^{n-s-1})$ to
$((k-1)^s,t+1,0^{n-s-1})$; but this is immediate since they differ in only one digit, and
hence the result follows from Lemma~\ref{lemma:one_change}.
\end{itemize}
\end{proof}

Clearly Result~\ref{result:old_S_lemma} follows from Theorem~\ref{theorem:S_characteristic} as
a special case.

Analogously to the Hamming weight case, we have the following simple corollary, whose proof is
immediate using the same arguments as for Corollary~\ref{corollary:H_characterisation}.

\begin{corollary} \label{corollary:S_characteristic}
Suppose $n\geq2$, $k>2$ and $0\leq r< s\leq (k-1)n$. Then $S_k^{r,s}(n-1)$ is Eulerian.
\end{corollary}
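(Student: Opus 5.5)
The plan mirrors the proof of Corollary~\ref{corollary:H_characterisation}, with Theorem~\ref{theorem:S_characteristic}(ii) replacing Theorem~\ref{theorem:Hamming_characterisation}(ii). Balance is not at issue: Lemma~\ref{lemma:S_balanced} gives directly that $S_k^{r,s}(n-1)$ is balanced. So, by Definition~\ref{definition:Eulerian}, it only remains to prove that $S_k^{r,s}(n-1)$ is strongly connected.

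For this, I will write the edge set of $S_k^{r,s}(n-1)$ as the union of the edge sets of $S_k^{t,t+1}(n-1)$ for $r\leq t<s$. This decomposition uses every digit-sum value in $[r,s]$, and it needs $r<s$ so that each of these subgraphs is defined. By Theorem~\ref{theorem:S_characteristic}(ii), each $S_k^{t,t+1}(n-1)$ is strongly connected. Every such subgraph is a subgraph of $S_k^{r,s}(n-1)$, so any path inside one of them is also a path in the larger graph.

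Next I take two edges $\mathbf{a}$ and $\mathbf{b}$ of $S_k^{r,s}(n-1)$, with digit sums $t$ and $u$. There is no loss of generality in assuming $t\leq u$, because the argument can be run in either direction. Strong connectivity in both directions then follows because each link in the chain below is strongly connected. I fix arbitrary edges $\mathbf{c}_v$ of digit sum $v$ for each $v$ with $t\leq v\leq u$, taking $\mathbf{c}_t=\mathbf{a}$ and $\mathbf{c}_u=\mathbf{b}$. Such edges exist because every value in $[0,(k-1)n]$ is the digit sum of some $n$-tuple. Consecutive edges $\mathbf{c}_v$ and $\mathbf{c}_{v+1}$ both lie in $S_k^{v,v+1}(n-1)$, so there are directed paths between them in both directions. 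Concatenating these paths gives directed paths from $\mathbf{a}$ to $\mathbf{b}$ and from $\mathbf{b}$ to $\mathbf{a}$.

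Two cases need a separate word. If $t=u$, both edges lie in a single subgraph $S_k^{t,t+1}(n-1)$ (or $S_k^{t-1,t}(n-1)$ when $t=s$), which again uses $r<s$. Strong connectivity on edges rather than vertices is also harmless, since every vertex of the subgraph is incident to an edge. There is no real obstacle here: the only care needed is to keep the index ranges correct so that each $t$ used satisfies $0\leq t<(k-1)n$, as required by Theorem~\ref{theorem:S_characteristic}(ii).
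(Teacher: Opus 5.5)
Your proposal is correct and is essentially the paper's own argument. The paper proves this corollary by the same reasoning as Corollary~\ref{corollary:H_characterisation}: balance comes from Lemma~\ref{lemma:S_balanced}, and strong connectivity comes from chaining edges of consecutive digit sums through the strongly connected subgraphs $S_k^{v,v+1}(n-1)$ of Theorem~\ref{theorem:S_characteristic}(ii). Your index bookkeeping, in particular the case $t=u=s$ handled via $S_k^{s-1,s}(n-1)$, is if anything more careful than the written proof of Corollary~\ref{corollary:H_characterisation}, whose ranges ``$t=u+1$'' and ``$u\leq i<t$'' are transposed.
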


\section{Partitioning using pseudoweight}

The third `measure' for $n$-tuples that we consider is the pseudoweight, as in
Definition~\ref{definition:pseudoweight}.  In this case we need the following.

\begin{definition}[The pseudoweight subgraph]
Suppose $k>2$, $n\geq 2$ and $0\leq r\leq s\leq (k-1)n$ (where $r\in\mathbb{Z}$ if $k$ is even and
$r=t/2$ for some $t\in\mathbb{Z}$ if $k$ is odd). We write $P_k^{r,s}(n-1)$ for the subgraph of
$B_k(n-1)$ consisting of the edges corresponding to $n$-tuples $\mathbf{a}$ satisfying $r\leq
w_p(\mathbf{a})\leq s$.
\end{definition}

Exactly as for digit sum, a special case of the problem we address here has been previously
considered for this function. The discussion following \cite[Theorem 3.8]{Mitchell25a} establishes
the following result.

\begin{result}[\cite{Mitchell25a}]  \label{result:old_P_lemma}
Suppose $k\geq2$ and $n\geq 2$. Then $P_k^{n,s}(n-1)$ is Eulerian, where
\[ s = \begin{cases}
\frac{nk-1}{2}\text{~~if~~$k$ is odd}\\
\frac{nk-2}{2}\text{~~if~~$k$ is even}
\end{cases}\]
\end{result}

\begin{remark}
Since $\frac{nk}{2}$ is the average value of the function $w_p$ when calculated over all
$n$-tuples, this simply says that the subgraph of $B_k(n-1)$ consisting of all $n$-tuples with
pseudoweight less than the average value is Eulerian.
\end{remark}

If $k=2$ then $f(0)=f(1)=1$ and the pseudoweight function is trivial and uninteresting. Thus, in
the remainder of this discussion we restrict our attention to $k>2$.

First, analogously to Lemmas~\ref{lemma:H_balanced} and \ref{lemma:H_successor rule} we have the
following.

\begin{lemma}  \label{lemma:P_balanced}
If $k>2$, $n\geq 2$ and $n\leq r\leq s\leq n(k-1)$ (where $r,s\in\mathbb{Z}$ if $k$ is even and
$r=t/2$ and $s=u/2$ for some $t,u\in\mathbb{Z}$ if $k$ is odd) then $P_k^{r,s}(n-1)$ is balanced.
\end{lemma}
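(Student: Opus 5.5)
The plan is to argue exactly as in Lemmas~\ref{lemma:H_balanced} and \ref{lemma:S_balanced}: first show balance when the weight is fixed to a single value, then take a union. For a single admissible value $t$, write $P_k^{t,t}(n-1)$ for the subgraph of $B_k(n-1)$ whose edges are the $n$-tuples $\mathbf{a}$ with $w_p(\mathbf{a})=t$. The key observation is that $w_p$ is permutation-agnostic, so every cyclic shift of an $n$-tuple has the same pseudoweight.

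Fix a vertex $(a_1,a_2,\dots,a_{n-1})$ of $B_k(n-1)$ and a symbol $x\in\mathbb{Z}_k$. We have
\[ w_p(x,a_1,\dots,a_{n-1}) = f(x)+\sum_{i=1}^{n-1}f(a_i) = w_p(a_1,\dots,a_{n-1},x). \]
Hence $(x,a_1,\dots,a_{n-1})$ is an incoming edge of $P_k^{t,t}(n-1)$ at this vertex if and only if $(a_1,\dots,a_{n-1},x)$ is an outgoing edge there. The map $x\mapsto x$ is therefore a bijection between the incoming and outgoing edges at each vertex, so in-degree equals out-degree and $P_k^{t,t}(n-1)$ is balanced.

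Next, note that the edge set of $P_k^{r,s}(n-1)$ is the disjoint union of the edge sets of $P_k^{t,t}(n-1)$. Here $t$ ranges over all values in $[r,s]$ that $w_p$ can actually take: integers if $k$ is even, and half-integers if $k$ is odd, since then $f(0)=k/2$. At each vertex, the in-degree and out-degree in $P_k^{r,s}(n-1)$ are the sums of the corresponding degrees over these $t$, so balance follows at once. Alternatively, one can argue directly: the condition $r\le w_p\le s$ is the same for both edges in each matched pair, so the same bijection works for the whole range without decomposing.

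There is no real obstacle here. The only point needing care is the bookkeeping about which values $r,s,t$ are admissible. The argument itself never uses integrality; it depends only on $w_p$ being a sum of per-symbol values $f(a_i)$.
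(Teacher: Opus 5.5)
Your proposal is correct and follows essentially the same route as the paper: the correspondence $x\mapsto x$ between incoming edges $(x,a_1,\dots,a_{n-1})$ and outgoing edges $(a_1,\dots,a_{n-1},x)$, which have equal pseudoweight, shows that each $P_k^{t,t}(n-1)$ is balanced, and taking the union over $t\in[r,s]$ gives the result. Your remark that the same bijection applies directly to the whole range $[r,s]$ is also valid, and is a slight shortcut that avoids the decomposition entirely.
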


\begin{proof}
The fact that $P_k^{r,r}(n-1)$ is balanced for any $r$ is immediate since if
$(a_1,a_2,\dots,a_{n-1})$ is a vertex in $B_k(n-1)$ and $x\in \mathbb{Z}_k$, then
$(x,a_1,a_2,\dots,a_{n-1})$ is an incoming edge in $P_k^{r,r}(n-1)$ if and only if
$(a_1,a_2,\dots,a_{n-1},x)$ is an outgoing edge in $P_k^{r,r}(n-1)$, since they have the same
pseudoweight. But the set of edges in $P_k^{r,s}(n-1)$ is simply the union of the sets of edges in
$P_k^{t,t}(n-1)$ for every $t$ satisfying $r\leq t\leq s$, and the result follows.
\end{proof}

\begin{lemma}  \label{lemma:P_successor rule}
Suppose $k>2$, $n\geq 2$, $n\leq r\leq n(k-1)$ and $(a_1,a_2,\dots,a_{n-1})$ is a vertex in
$P_k^{r,r}(n-1)$.  If $(a_0,a_1,\dots,a_{n-1})$ is an incoming edge to this vertex then
$(a_1,a_2,\dots,a_{n})$ is an outgoing edge if and only if:
\[ a_n=\begin{cases}
0 \text{~or~} k/2 \text{~~if $k$ is even and $a_0=0$ or $a_0=k/2$}\\
a_0               \text{~~otherwise}
\end{cases} \]
\end{lemma}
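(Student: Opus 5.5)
The plan is to argue exactly as for Lemmas~\ref{lemma:H_successor rule} and \ref{lemma:S_successor rule}. Every edge of $P_k^{r,r}(n-1)$ has pseudoweight exactly $r$. The incoming edge $(a_0,a_1,\dots,a_{n-1})$ and the candidate outgoing edge $(a_1,\dots,a_{n-1},a_n)$ share the entries $a_1,\dots,a_{n-1}$, and $w_p$ is an additive sum of the values $f(a_i)$. Hence
\[ w_p(a_1,\dots,a_n)-w_p(a_0,\dots,a_{n-1}) = f(a_n)-f(a_0). \]
Since the incoming edge has pseudoweight $r$, the candidate is an edge of $P_k^{r,r}(n-1)$ if and only if $f(a_n)=f(a_0)$. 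The whole statement therefore reduces to determining the fibres of $f:\mathbb{Z}_k\rightarrow\mathbb{Q}$.

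The key step is this fibre analysis. On the nonzero elements, $f$ is the identity, taking values $1,2,\dots,k-1$ (as integers), which are pairwise distinct. The only possible collision is with $f(0)=k/2$. If $k$ is odd, $k/2$ is not an integer, so it equals no $f(a)$ with $a\neq0$; thus $f$ is injective and the condition is $a_n=a_0$. If $k$ is even, then $k/2\in\{1,\dots,k-1\}$ because $k>2$. So $f(0)=f(k/2)$, and this is the unique collision. The fibre $\{0,k/2\}$ has size two and all other fibres are singletons. Consequently, when $a_0\in\{0,k/2\}$ the allowed values are exactly $a_n\in\{0,k/2\}$, and otherwise $a_n=a_0$, which is precisely the displayed case split.

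No step is a serious obstacle. The only point needing care is to note that $k>2$ guarantees $k/2$ is a genuine nonzero symbol when $k$ is even; for $k=2$ it would coincide with $1$, and $f$ would be constant, as remarked earlier. The hypotheses on $r$ only ensure the subgraph is as defined and play no further role.
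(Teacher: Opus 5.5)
Your proposal is correct and follows essentially the same route as the paper. Additivity of $w_p$ reduces membership of the outgoing edge to $f(a_n)=f(a_0)$, and the only failure of injectivity of $f$ is the collision $f(0)=f(k/2)$ when $k$ is even; your explicit fibre analysis (nonzero symbols map to distinct integers, and $k/2\notin\mathbb{Z}$ for odd $k$) simply spells out the step the paper states as automatic.
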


\begin{proof}
Since both $(a_0,a_1,\dots,a_{n-1})$ and $(a_1,a_2,\dots,a_{n})$ have the same pseudoweight, it
follows that $f(a_0)=f(a_n)$, where $f$ is as defined in Definition~\ref{definition:pseudoweight}.
Hence, unless $k$ is even and $f(a_0)=f(a_n)=k/2$, this automatically implies $a_0=a_n$.
Conversely, if $f(a_0)=f(a_n)$ then $(a_0,a_1,\dots,a_{n-1})$ and $(a_1,a_2,\dots,a_{n})$ have the
same pseudoweight, and the result follows.
\end{proof}

The analysis for this function is somewhat more complex.  However, we do have a result similar to
Theorems~\ref{theorem:Hamming_characterisation}(i) and \ref{theorem:S_characteristic}.

\begin{theorem}  \label{theorem:P_characteristic_a}
Suppose $n\geq2$, $k>2$, and $n\leq r\leq n(k-1)$ (where $r\in\mathbb{Z}$ if $k$ is even and
$r=t/2$ for some $t\in\mathbb{Z}$ if $k$ is odd). Then $P_k^{r,r}(n-1)$ is Eulerian if and only if
\begin{itemize}
\item[(i)] $r\in\{n,(k-1)n\}$,
\item[(ii)] $r\in\{n+1,(k-1)n-1\}$ and $k\neq3$, or
\item[(iii)]  $r\in\{n-1+k/2,(k-1)(n-1)+k/2\}$ and $k$ odd.
\end{itemize}
\end{theorem}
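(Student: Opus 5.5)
Since $P_k^{r,r}(n-1)$ is balanced by Lemma~\ref{lemma:P_balanced}, being Eulerian is equivalent to strong connectivity, and I would study that through Lemma~\ref{lemma:P_successor rule}. Call two symbols \emph{equivalent} if they have the same $f$-value. For $k$ odd every symbol is its own class, since $f(0)=k/2$ is not an integer. For $k$ even the only non-trivial class is $\{0,k/2\}$. Write $\bar{\mathbf{a}}$ for the word of $f$-values of $\mathbf{a}$.

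The successor rule says that a walk in $P_k^{r,r}(n-1)$ moves from $\mathbf{a}$ to a cyclic shift of $\mathbf{a}$, except that the incoming symbol may be replaced by an equivalent one. So $\bar{\mathbf{a}}$ is invariant along walks up to cyclic shift. Conversely, all tuples with the same $\bar{\mathbf{a}}$ up to cyclic shift lie in one strongly connected piece. To see this, rotate the tuple and, each time a symbol of class $\{0,k/2\}$ re-enters, choose the representative we want; after $n$ steps any such tuple is reached. This gives the key reduction: $P_k^{r,r}(n-1)$ is strongly connected if and only if all words over $f$-values with sum $r$ are cyclic shifts of a single word.

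\textbf{Sufficiency.} Next I would check that in each listed case the admissible multiset of $f$-values is unique and of the form $x^n$ or $x^{n-1}y$. Such a multiset has only one arrangement up to rotation. The $f$-values lie in $\{1,\dots,k-1\}\cup\{k/2\}$, with minimum $1$ and maximum $k-1$.
\begin{itemize}
\item For $r=n$ the only word is $1^n$, and for $r=(k-1)n$ it is $(k-1)^n$.
\item For $r=n+1$ the word $1^{n-1}2$ is the only option once half-integer values are ruled out. Using $k/2$ twice would give sum at least $n-2+k$, which exceeds $n+1$ when $k>3$. When $k$ is even and $k=4$, the value $2=k/2$ is fine because $0$ and $2$ are equivalent.
\item For $k$ odd and $r=n-1+k/2$, the sum is a half-integer, so an odd number of zeros is needed. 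Three zeros already give at least $n-3+3k/2>r$, so the word must be $1^{n-1}(k/2)$.
\item The symmetric cases $r=(k-1)n-1$ and $r=(k-1)(n-1)+k/2$ are handled by the same argument from the top end.
\end{itemize}

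\textbf{Necessity.} For every other $r$ I would exhibit two $f$-words of sum $r$ that are not cyclic shifts of each other.
\begin{itemize}
\item For $k=3$ and $r=n+1$, use $1^{n-1}2$ and $1^{n-2}(3/2)^2$, and symmetrically at the top end.
\item Otherwise, take any word of sum $r$. If its multiset has three distinct values $x,y,z$, the arrangements $xyz\cdots$ and $xzy\cdots$ differ.
\item If its multiset is $x^ay^b$ with $a,b\geq2$, compare $xxyy\cdots$ with $xyxy\cdots$.
\item In the remaining case $x^{n-1}y$, perturb the word to get a second multiset with the same sum. Either move one unit between two entries, changing $x,x$ into $x-1,x+1$ when $1<x<k-1$, or adjust $y$ against an $x$. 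Alternatively, trade two integer entries for two copies of $k/2$ (or the reverse) when parity allows.
\end{itemize}

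I expect this last case analysis to be the main obstacle. One must show that a valid perturbation always exists except exactly at the listed $r$. The difficulty is the boundary behaviour when $x\in\{1,k-1\}$, together with the extra value $k/2$ interacting with the parity of $k$. I would handle it by splitting on whether $k$ is even or odd and on whether $r$ is an integer, and treating $x=1$ and $x=k-1$ symmetrically via the map $a\mapsto k-a$ on non-zero symbols. That map sends $f$-value $v$ to $k-v$, so it sends weight $r$ to weight $kn-r$.
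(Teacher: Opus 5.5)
Your reduction is sound: along a walk in $P_k^{r,r}(n-1)$ the word of $f$-values is preserved up to rotation, and every tuple whose $f$-word is a rotation of it is reachable. So the subgraph is Eulerian exactly when the $f$-words of sum $r$ form a single rotation class, and your sufficiency check is fine. The gap is the final necessity case $x^{n-1}y$. It cannot be filled, because the ``only if'' direction is false for $n=2$ and odd $k\ge5$.

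When $n=2$, every two-element multiset has only one cyclic arrangement. For odd $k$, a half-integer $r$ forces exactly one zero, so the only $f$-multiset is $\{k/2,\,r-k/2\}$. Concretely, for $k=5$, $n=2$, $r=9/2$, the only edges are $(0,2)$ and $(2,0)$. They form the $2$-cycle $0\to2\to0$ in $B_5(1)$, so $P_5^{9/2,9/2}(1)$ is Eulerian, although $9/2\notin\{2,3,\tfrac{7}{2},\tfrac{13}{2},7,8\}$. The same happens for every $r=x+k/2$ with $2\le x\le k-2$. None of your perturbations applies there:
\begin{itemize}
\item $x$ occurs only once, so $x,x\to x-1,x+1$ is unavailable;
\item moving a unit between $x$ and $k/2$ produces a half-integer other than $k/2$, which is not an $f$-value;
\item there is only one integer entry, so nothing can be traded for two copies of $k/2$.
\end{itemize}

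The paper's proof has the same hole. For odd $k$ and $n+1<r<(k-1)n-1$ it simply asserts that $P_k^{r,r}(n-1)$ ``will always contain more than one necklace'', which fails in exactly these cases. What your argument can actually prove is the theorem under the extra hypothesis $n\ge3$, or with the values $x+k/2$, $2\le x\le k-2$, added to (iii) when $n=2$.

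The integer-$r$ cases, including all even $k$, go through as you sketch, even for $n=2$. For half-integer $r$ and $n\ge3$ your plan also closes:
\begin{itemize}
\item A word with at least three copies of $k/2$ lets you replace two of them by $(k-1)/2$ and $(k+1)/2$.
\item With exactly one copy, the remaining $n-1\ge2$ integer entries either take two distinct values, giving three distinct values overall and hence two rotation classes, or equal $x^{n-1}$. Then $x\in\{1,k-1\}$ gives the listed values, and $1<x<k-1$ allows $x,x\to x-1,x+1$.
\end{itemize}
You should also fix a convention for empty edge sets, such as $r=n+\tfrac{1}{2}$ for odd $k\ge5$. These are vacuously strongly connected and balanced under the paper's literal definition of Eulerian, whereas your ``single rotation class'' criterion implicitly declares them non-Eulerian.
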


\begin{proof}
Because of Lemma~\ref{lemma:P_balanced}, to show the Eulerian property holds it suffices to show
that the relevant subgraph is strongly connected.
\begin{itemize}
\item[(i)] $P_k^{n,n}(n-1)$ consists of a single edge, namely $1^n$ --- a loop from the vertex
    $1^{n-1}$ to itself; this is clearly Eulerian.  Similarly, the subgraph
    $P_k^{(k-1)n,(k-1)n}(n-1)$ consists of the single edge $(k-1)^n$ and thus is also Eulerian.
\item[(ii)] $P_4^{n+1,n+1}(n-1)$ contains all the $n$-tuples in the two necklaces $[1^{n-1},2]$
    and $[1^{n-1},0]$.  These two necklaces share a vertex, namely $1^{n-1}$, and hence the
    subgraph is strongly connected. If $k>4$, then $P_k^{n+1,n+1}(n-1)$ contains only a single
    necklace, namely $[1^{n-1},2]$, and thus the subgraph is strongly connected.  Analogous
    arguments hold for $P_4^{3n-1,3n-1}(n-1)$, which contains the necklaces $[3^{n-1},2]$ and
    $[3^{n-1},0]$, and $P_k^{n(k-1)-1,n(k-1)-1}(n-1)$, which contains the single necklace
    $[(k-1)^{n-1},k-2]$.
\item[(iii)] If $k$ is odd then $n-1+k/2$ is not an integer, and hence the only necklace in
    $P_k^{n-1+k/2,n-1+k/2}(n-1)$ is $[1^{n-1},0]$, and so $P_k^{n-1+k/2,n-1+k/2}(n-1)$ is
    Eulerian.  Similarly, $(k-1)(n-1)+k/2$ is not an integer, and hence the only necklace in
    $P_k^{(k-1)(n-1)+k/2,(k-1)(n-1)+k/2}(n-1)$ is $[(k-1)^{n-1},0]$ and so
    $P_k^{(k-1)(n-1)+k/2,(k-1)(n-1)+k/2}(n-1)$ is Eulerian.
\end{itemize}
It remains to show that in all other cases $P_k^{r,r}(n-1)$ is not Eulerian. First consider the
special case $k=3$ and $r\in\{n+1,(k-1)n-1\}$. Observe that $P_3^{n+1,n+1}$ contains two necklaces:
$[1^{n-1},2]$ and $[1^{n-2},0,0]$. These clearly do not share a vertex, and hence $P_3^{n+1,n+1}$
is not Eulerian.  Similarly, $P_3^{2n-1,2n-1}(n-1)$ contains the two necklaces. $[2^{n-1},1]$ and
$[2^{n-2},0,0]$. These clearly do not share a vertex, and hence $P_3^{2n-1,2n-1}(n-1)$ is not
Eulerian.

Next, suppose $k$ is odd, $n+1<r<(k-1)n-1$ and $r\notin\{n-1+k/2,(k-1)(n-1)+k/2\}$.  In this case
$P_k^{r,r}(n-1)$ will always contain more than one necklace. However, no two necklaces can share a
vertex since $w_p$ is one-to-one when $k$ is odd, and hence a vertex can only have at most one
incoming or outgoing edge with any given pseudoweight.  Hence, $P_k^{r,r}(n-1)$ is not strongly
connected.

Finally, suppose $k$ is even and $n+1< r< (k-1)n-1$.  Suppose $r=n+s(k-2)+t-1$, where $1\leq t<k-1$
and $0\leq s<n$.  Then there exists an edge in $P_k^{r,r}(n-1)$ of the form
$((k-1)^s,t,1^{n-s-1})$. No vertex in this necklace will have more than one incoming edge or
outgoing edge in $P_k^{r,r}(n-1)$ except in the case $t=k/2$.  If $t=k/2$ then the necklace
$[(k-1)^s,0,1^{n-s-1}]$ will share a vertex with the necklace $[(k-1)^s,t,1^{n-s-1}]$ but neither
necklace will share a vertex with any other edges in $P_k^{r,r}(n-1)$.

We next show that $P_k^{r,r}(n-1)$ will always contain another necklace not equal to either
$[(k-1)^s,t,1^{n-s-1}]$ (or $[(k-1)^s,0,1^{n-s-1}]$ if $t=k/2$), establishing that $P_k^{r,r}(n-1)$
is not strongly connected. We need to consider three cases.
\begin{itemize}
\item First suppose $s=0$, i.e.\ there exists an edge in $P_k^{r,r}(n-1)$ of the form
    $(t,1^{n-1})$, where $t>2$ since $r>n+1$.  Then there will also exist an edge of the form
    $(t-1,2,1^{n-2})$ in $P_k^{r,r}(n-1)$, since $n\geq2$, i.e.\ another necklace.
\item Next suppose $s>0$ and $t<k-2$; then, since $k\geq4$, there exists a further edge of the
    form $((k-1)^{s-1},k-2,t+1,1^{n-s-1})$ in $P_k^{r,r}(n-1)$.
\item Finally suppose $s>0$ and $t=k-2$.  Now, since $r< (k-1)n-1$, $n-s-1>0$.  Then the edge
    mentioned above takes the form $((k-1)^s,k-2,1^{n-s-1})$.  An additional edge is then
    $((k-1)^s,k-3,2,1^{n-s-2})$.
\end{itemize}
The result follows.
\end{proof}

The $k$ even case is somewhat simpler than the $k$ odd case, since if $k$ is even then
$w_p(\mathbf{a})\in\mathbb{Z}$ for any $k$-ary $n$-tuple $\mathbf{a}$.  We first need the following
preliminary lemma.

\begin{lemma}  \label{lemma:change_zeros_to_half_k}
Suppose $n\geq2$, $k>2$ is even, and $n\leq r\leq n(k-1)$. Suppose $\mathbf{a}$ is an edge (i.e.\ a
$k$-ary $n$-tuple) in $P_k^{r,r}(n-1)$ and $\bar{\mathbf{a}}$ is equal to $\mathbf{a}$ with every
$0$ changed to $k/2$. Then there exist directed paths in $P_k^{r,r}(n-1)$ from $\mathbf{a}$ to
$\bar{\mathbf{a}}$ and from $\bar{\mathbf{a}}$ to $\mathbf{a}$.
\end{lemma}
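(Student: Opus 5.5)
The idea is to change one zero at a time, using Lemma~\ref{lemma:P_successor rule}. For $k$ even we have $f(0)=f(k/2)=k/2$. So replacing any $0$ in an $n$-tuple by $k/2$, or vice versa, leaves the pseudoweight unchanged, and every intermediate tuple stays in $P_k^{r,r}(n-1)$. It therefore suffices to show the following: if $\mathbf{a}'$ is obtained from $\mathbf{a}$ by changing a single entry $a_i=0$ into $k/2$, then $P_k^{r,r}(n-1)$ contains directed paths from $\mathbf{a}$ to $\mathbf{a}'$ and from $\mathbf{a}'$ to $\mathbf{a}$. Iterating this over the zeros of $\mathbf{a}$, in any order, gives a path from $\mathbf{a}$ to $\bar{\mathbf{a}}$. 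Reversing the sequence of single changes gives a path from $\bar{\mathbf{a}}$ back to $\mathbf{a}$.

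For the single change, I would reuse the argument of Lemma~\ref{lemma:one_change}, now inside the one-weight subgraph. Since $w_p$ is permutation-agnostic, every cyclic shift of $\mathbf{a}$ is an edge of $P_k^{r,r}(n-1)$. Successive cyclic shifts therefore give a path from $\mathbf{a}$ to
\[(a_i,a_{i+1},\dots,a_{n-1},a_0,\dots,a_{i-1})=(0,a_{i+1},\dots,a_{i-1}).\]
This is an incoming edge to the vertex $(a_{i+1},\dots,a_{n-1},a_0,\dots,a_{i-1})$. By Lemma~\ref{lemma:P_successor rule}, with $a_0$ there equal to $0$, the tuple $(a_{i+1},\dots,a_{i-1},k/2)$ is an outgoing edge of that vertex in $P_k^{r,r}(n-1)$. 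This tuple is a cyclic shift of $\mathbf{a}'$, so further cyclic shifts (again within $P_k^{r,r}(n-1)$, by permutation-agnosticism) reach $\mathbf{a}'$ itself. The reverse path uses the same construction with the roles of $0$ and $k/2$ exchanged, which Lemma~\ref{lemma:P_successor rule} also allows.

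I expect no real obstacle here. The only points to check are that the cyclic-shift paths remain in $P_k^{r,r}(n-1)$ (true because $w_p$ is invariant under permutation) and that index arithmetic is taken modulo $n$, as in the proof of Lemma~\ref{lemma:one_change}. The degenerate case where $\mathbf{a}$ has no zeros is trivial, since then $\bar{\mathbf{a}}=\mathbf{a}$.
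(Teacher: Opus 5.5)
Your proof is correct and follows the paper's route: the paper also replaces the zeros by $k/2$ one at a time, and reverses these changes for the path back, justifying each single change by Lemma~\ref{lemma:one_change}. The only difference is that you re-derive that lemma's cyclic-shift argument inside $P_k^{r,r}(n-1)$ via the pseudoweight successor rule, whereas the paper cites Lemma~\ref{lemma:one_change} directly; it already allows $r=s$, and $f(0)=f(k/2)$ keeps both tuples at pseudoweight $r$.
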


\begin{proof}
The result follows using repeated applications of Lemma~\ref{lemma:one_change}.  That is, to obtain
a directed path from $\mathbf{a}$ to $\bar{\mathbf{a}}$, simply use Lemma~\ref{lemma:one_change} to
obtain a directed path from $\mathbf{a}$ to an edge $\mathbf{a}'$ equal to $\mathbf{a}$ with a
single $0$ changed to $k/2$, and then a directed path from $\mathbf{a}'$ to an edge $\mathbf{a}''$
equal to $\mathbf{a}'$ with another $0$ changed to $k/2$, and so on until $\bar{\mathbf{a}}$ is
reached.  For the path in the opposite direction simply change the relevant entries of $k/2$ to $0$
one by one until $\mathbf{a}$ is reached.
\end{proof}

We can now give the main result.

\begin{theorem} \label{theorem:P_characteristic_k_even}
Suppose $n\geq2$ and $k>2$ is even. Then $P_k^{r,r+1}(n-1)$ is Eulerian for every $r\in \mathbb{Z}$
satisfying $n\leq r<(k-1)n$.
\end{theorem}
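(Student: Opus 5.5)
We follow the template of Theorems~\ref{theorem:Hamming_characterisation}(ii) and \ref{theorem:S_characteristic}(ii). By Lemma~\ref{lemma:P_balanced}, $P_k^{r,r+1}(n-1)$ is balanced, so it suffices to prove strong connectivity.

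The first step is to reduce to $n$-tuples with no zero entries. By Lemma~\ref{lemma:change_zeros_to_half_k}, every edge $\mathbf{a}$ is mutually reachable with $\bar{\mathbf{a}}$, since $w_p(\bar{\mathbf{a}})=w_p(\mathbf{a})$. So it is enough to connect edges all of whose entries lie in $\{1,\dots,k-1\}$. On such tuples $w_p$ coincides with the digit sum, and subtracting $1$ from every entry gives a bijection with $(k-1)$-ary tuples whose digit sum is $w_p-n$. This suggests transporting the argument of Lemma~\ref{lemma:S_prelims} and Theorem~\ref{theorem:S_characteristic}(ii) directly.

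Concretely, we prove analogues of Lemma~\ref{lemma:S_prelims}(i)--(iv) for zero-free tuples, with the alphabet $\{1,\dots,k-1\}$ in place of $\{0,\dots,k-1\}$.
\begin{itemize}
\item[(i)] From a zero-free tuple of weight $r+1>n$, some entry exceeds $1$. Cyclically shift it to the front, then append that entry minus one. This gives an edge of weight $r$ that is still zero-free.
\item[(ii)] Dually, from a tuple of weight $r<(k-1)n$, some entry is less than $k-1$. Increasing it by one at the front gives a predecessor of weight $r+1$.
\item[(iii),(iv)] The ``move a unit from position $i+1$ to position $i$'' steps are proved verbatim as in Lemma~\ref{lemma:S_prelims}. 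The condition $a_{i+1}>0$ is replaced by $a_{i+1}>1$, so that the modified entry never becomes $0$ and all intermediate tuples have weight $r$ or $r+1$.
\end{itemize}
The intermediate edges used are cyclic shifts and one-entry modifications. Their weights are as computed, because on nonzero entries $f$ is the identity.

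Now write $r-n=s(k-2)+t$ with $0\le t<k-2$. Repeated application of the modified (iii) drives any zero-free weight-$r$ tuple to the canonical form $((k-1)^s,t+1,1^{n-s-1})$. Similarly, the modified (iv) shows that $((k-1)^s,t+2,1^{n-s-1})$ reaches any zero-free weight-$(r+1)$ tuple. The two canonical tuples differ in one position, so Lemma~\ref{lemma:one_change} connects them in both directions. Combining this with (i), (ii) and the zero-removal step gives a path between any two edges, which establishes strong connectivity.

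The main obstacle is bookkeeping rather than ideas. We must check that the terminal configuration of the (iii)-moves is exactly the stated canonical form; with minimum value $1$ this is a lexicographic-maximum argument. We must also check that the canonical forms exist for every $r$ with $n\le r<(k-1)n$, including the edge case $s=n-1$, where $1^{n-s-1}$ is empty and $t+2\le k-1$ is still needed. Finally, we should confirm that Lemma~\ref{lemma:one_change} applies with the two weights $r$ and $r+1$ as its endpoints.
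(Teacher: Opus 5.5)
Your proposal is correct and follows essentially the same route as the paper: balancedness via Lemma~\ref{lemma:P_balanced}, reduction to zero-free tuples via Lemma~\ref{lemma:change_zeros_to_half_k}, and then the subtract-one correspondence with digit sums over $\mathbb{Z}_{k-1}$. The only difference is that the paper treats that correspondence as a graph isomorphism onto $S_{k-1}^{r-n,r-n+1}(n-1)$ and cites Theorem~\ref{theorem:S_characteristic}(ii) directly (valid since $k-1\geq 3$), whereas you re-run that proof in shifted coordinates. Citing the isomorphism makes the bookkeeping you flag unnecessary: the $a_{i+1}>1$ condition, the canonical forms, and the case $s=n-1$.
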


\begin{proof}
By Lemma~\ref{lemma:P_balanced} we need only show that $P_k^{r,r+1}(n-1)$ is strongly connected.
Also, by Lemma~\ref{lemma:change_zeros_to_half_k}, we need only show that there is a directed path
between any two edges in $P_k^{r,r+1}(n-1)$ whose corresponding $n$-tuples do not contain any
zeros.  But there is a simple graph isomorphism between the subgraph of $P_k^{r,r+1}(n-1)$ whose
edges are the $n$-tuples containing no zeros and $S_{k-1}^{r-n,r-n+1}(n-1)$ (by subtracting one
from every entry in an $n$-tuple), and so the result follows from
Theorem~\ref{theorem:S_characteristic}(ii).
\end{proof}

To analyse the $k$ odd case we first need the following.

\begin{lemma}  \label{lemma:k_odd_zero-free_path}
Suppose $n\geq2$, $k>2$ is odd, and $n+(k-1)/2\leq r \leq n(k-1)-(k-2)/2$, where $r=t/2$ for some
$t\in\mathbb{Z}$.  Then:
\begin{itemize}
\item[(i)] if $\mathbf{a}$ has pseudoweight $r$, there exists a directed path in
    $P_k^{r,r+0.5}(n-1)$ from $\mathbf{a}$ to $\mathbf{a}'$, where $\mathbf{a}'$ contains no
    entries equal to 0;
\item[(ii)] if $\mathbf{b}$ has pseudoweight $r+0.5$, there exists a directed path in
    $P_k^{r,r+0.5}(n-1)$ from $\mathbf{b}'$ to $\mathbf{b}$, where $\mathbf{b}'$ contains no
    entries equal to 0.
\end{itemize}
\end{lemma}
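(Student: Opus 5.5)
The idea is to remove the zeros of $\mathbf{a}$ one at a time, using Lemma~\ref{lemma:one_change} at each step. Since $k$ is odd, $f(0)=k/2$ lies exactly halfway between the two admissible nonzero symbols $(k-1)/2$ and $(k+1)/2$. Both of these lie in $[1,k-1]$ because $k\geq3$. Hence replacing a single $0$ by $(k+1)/2$ raises the pseudoweight by exactly $0.5$, and replacing a single $0$ by $(k-1)/2$ lowers it by exactly $0.5$. The graph $P_k^{r,r+0.5}(n-1)$ contains exactly the $n$-tuples of pseudoweight $r$ or $r+0.5$, so each such replacement moves between the two permitted weight classes.

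For (i), suppose $\mathbf{a}$ has pseudoweight $r$ and contains $m>0$ zeros; if $m=0$, take $\mathbf{a}'=\mathbf{a}$. Pick a zero of $\mathbf{a}$ and replace it by $(k+1)/2$, giving $\mathbf{a}_1$ of pseudoweight $r+0.5$. Then $\mathbf{a}$ and $\mathbf{a}_1$ are edges of $P_k^{r,r+0.5}(n-1)$ with weights equal to the lower and upper bounds, and they differ in a single position. Since $w_p$ is permutation-agnostic, Lemma~\ref{lemma:one_change} gives a directed path from $\mathbf{a}$ to $\mathbf{a}_1$. If $\mathbf{a}_1$ still contains a zero, replace it by $(k-1)/2$ to get $\mathbf{a}_2$ of weight $r$. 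Lemma~\ref{lemma:one_change} again gives a path from $\mathbf{a}_1$ to $\mathbf{a}_2$, since it applies in both directions. Continue alternating between $(k+1)/2$ and $(k-1)/2$, so that the weight alternates between $r+0.5$ and $r$. Each step removes one zero and keeps us inside $P_k^{r,r+0.5}(n-1)$. After $m$ steps we reach a zero-free $n$-tuple $\mathbf{a}'=\mathbf{a}_m$, and concatenating the paths gives (i).

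The weight of $\mathbf{a}'$ is $r$ if $m$ is even and $r+0.5$ if $m$ is odd. This is consistent with parity: a zero-free tuple has integer pseudoweight, and $w_p(\mathbf{a})-r$ changes parity by $1/2$ with each zero. So the endpoint lands in whichever of $r$, $r+0.5$ is an integer.

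Part (ii) is the mirror image. Start from $\mathbf{b}$ of pseudoweight $r+0.5$ and successively replace zeros, first by $(k-1)/2$ (going down to weight $r$), then by $(k+1)/2$, and so on, until a zero-free $\mathbf{b}'$ is reached. Each consecutive pair differs in one position and has weights $r$ and $r+0.5$. Lemma~\ref{lemma:one_change} therefore supplies directed paths in the reverse direction, and concatenating them gives a path from $\mathbf{b}'$ to $\mathbf{b}$.

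I expect no serious obstacle. The points to check are, first, that each intermediate tuple really has weight exactly $r$ or $r+0.5$, which the alternation guarantees. Second, the hypothesis $n+(k-1)/2\leq r\leq n(k-1)-(k-2)/2$ should be confirmed to be compatible with the construction. It seems to be needed only to make the weight range sensible, or for later use, rather than for this argument.
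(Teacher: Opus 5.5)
Your proposal is correct and is essentially the paper's proof: you replace the zeros one at a time, alternating $(k+1)/2$ and $(k-1)/2$ (starting with $(k-1)/2$ for part (ii)), so that every intermediate tuple has pseudoweight $r$ or $r+0.5$, and you chain the paths given by Lemma~\ref{lemma:one_change}. You are also right that the bounds on $r$ play no role in this argument.
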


\begin{proof}
\begin{itemize}
\item[(i)]  First observe that, for any $k$-ary $n$-tuple $\mathbf{a}$ of pseudoweight $s$,
    replacing a single zero in $\mathbf{a}$ by $(k-1)/2$ results in in a $k$-ary $n$-tuple
    of pseudoweight $s-0.5$, and replacing a single zero by $(k+1)/2$ results in in a
    $k$-ary $n$-tuple of pseudoweight $s+0.5$. So, starting from $\mathbf{a}$ (of
    pseudoweight $r$), replace every $0$ one by one with alternately $(k+1)/2$ and
    $(k-1)/2$ until an $n$-tuple is reached (which we label $\mathbf{a}'$) containing no
    $0$ entries. In every case the pseudoweight of the result will be either $r$ or
    $r+0.5$, and hence by Lemma~\ref{lemma:one_change} there is a directed path from the
    starting $n$-tuple to the modified $n$-tuple in $P_k^{r,r+0.5}(n-1)$.  Joining these
    paths together yields the desired directed path from $\mathbf{a}$ to $\mathbf{a}'$.
\item[(ii)] The argument for (i) works in reverse, replacing every $0$ entry in
    $\mathbf{b}$ with alternately $(k-1)/2$ and $(k+1)/2$, yielding a directed path in
    $P_k^{r,r+0.5}(n-1)$ starting at an $n$-tuple $\mathbf{b}'$ containing no entries equal
    to $0$ and ending in $\mathbf{b}$.
\end{itemize}
\end{proof}

We can now give a companion result to Theorem~\ref{theorem:P_characteristic_k_even}.

\begin{theorem} \label{theorem:P_characteristic_k_odd}
Suppose $n\geq2$ and $k>2$ is odd. Then $P_k^{r,r+1}(n-1)$ is Eulerian for every $r\in \mathbb{Z}$
satisfying $n\leq r<(k-1)n$.
\end{theorem}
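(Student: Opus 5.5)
We will show that $P_k^{r,r+1}(n-1)$ is strongly connected. By Lemma~\ref{lemma:P_balanced} it is balanced, so this is enough for the Eulerian property. Since $k$ is odd, $f(0)=k/2$ is a half-integer while every other $f(a)$ is an integer. So the edges of $P_k^{r,r+1}(n-1)$ have pseudoweight $r$, $r+0.5$ or $r+1$. An edge of integer pseudoweight contains an even number of zeros, and an edge of pseudoweight $r+0.5$ contains an odd number. The strategy follows Theorem~\ref{theorem:P_characteristic_k_even}: route every path through the zero-free edges, where the problem reduces to the digit-sum case.

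\emph{Step 1 (the zero-free core).} Let $Z$ be the subgraph of $P_k^{r,r+1}(n-1)$ whose edges are the zero-free $n$-tuples of pseudoweight $r$ or $r+1$. Subtracting one from every entry is a graph isomorphism from $Z$ onto $S_{k-1}^{r-n,r-n+1}(n-1)$, and $0\le r-n<(k-2)n$. Hence $Z$ is strongly connected:
\begin{itemize}
\item if $k-1>2$, by Theorem~\ref{theorem:S_characteristic}(ii);
\item if $k=3$, by Theorem~\ref{theorem:Hamming_characterisation}(ii), since the digit sum equals the Hamming weight when the alphabet is binary.
\end{itemize}
Because $Z$ is a subgraph of $P_k^{r,r+1}(n-1)$, any two zero-free edges are joined by directed paths in $P_k^{r,r+1}(n-1)$.

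\emph{Step 2 (removing zeros).} For an arbitrary edge $\mathbf{a}$, I will build a directed path to some zero-free edge, and for an arbitrary edge $\mathbf{b}$, a path from some zero-free edge to $\mathbf{b}$. The basic move is the observation in Lemma~\ref{lemma:k_odd_zero-free_path}: replacing a zero by $(k-1)/2$ lowers the pseudoweight by $0.5$, and replacing it by $(k+1)/2$ raises it by $0.5$. Both values lie in $[1,k-1]$ because $k\ge3$. Each such replacement changes one position, and the two tuples have pseudoweights differing by $0.5$ and both lying in $\{r,r+0.5,r+1\}$. Applying Lemma~\ref{lemma:one_change} to $W=P_k^{u,u+0.5}(n-1)\subseteq P_k^{r,r+1}(n-1)$, with $u$ the smaller of the two pseudoweights, gives directed paths in both directions between them. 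The permutation-agnostic hypothesis holds for $w_p$.

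The zeros are replaced one at a time, choosing the values so the pseudoweight stays in $[r,r+1]$:
\begin{itemize}
\item from pseudoweight $r$, use $(k+1)/2$ then $(k-1)/2$ alternately, oscillating between $r$ and $r+0.5$;
\item from $r+1$, use $(k-1)/2$ then $(k+1)/2$ alternately;
\item from $r+0.5$, first use either value to reach an integer pseudoweight, then continue as above.
\end{itemize}
By the parity remark, the process always ends at a zero-free tuple of pseudoweight $r$ or $r+1$, that is, an edge of $Z$. Because the moves are reversible, the same chain read backwards gives the path from a zero-free edge to $\mathbf{b}$. Concatenating ($\mathbf{a}$ to $Z$, through $Z$, then $Z$ to $\mathbf{b}$) yields strong connectivity.

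\emph{Main obstacle.} The delicate step is applying Lemma~\ref{lemma:one_change} correctly. That lemma needs the two tuples to realise exactly the lower and upper bounds of the subgraph in which the path is taken. So each zero-replacement must be placed inside the right half-step subgraph $P_k^{u,u+0.5}(n-1)$, and one must check that this subgraph sits inside $P_k^{r,r+1}(n-1)$. The other point to confirm is that the parity bookkeeping never forces a pseudoweight outside $[r,r+1]$, including in the extreme cases $r=n$ and $r$ close to $(k-1)n$. There Step 2 never needs to leave the range, and the corresponding zero-free digit-sum subgraph is still one of those covered by Theorem~\ref{theorem:S_characteristic}(ii).
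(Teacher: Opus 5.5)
Your proof is correct and follows the same route as the paper. You move from any edge to a zero-free edge by replacing zeros one at a time with $(k\pm1)/2$ via Lemma~\ref{lemma:one_change}, and you join zero-free edges through the isomorphism with $S_{k-1}^{r-n,r-n+1}(n-1)$. You are in fact slightly more careful than the paper: you handle $k=3$ via Theorem~\ref{theorem:Hamming_characterisation}(ii), since Theorem~\ref{theorem:S_characteristic} needs alphabet size greater than $2$, and you apply Lemma~\ref{lemma:one_change} directly rather than Lemma~\ref{lemma:k_odd_zero-free_path}, whose stated range of $r$ does not cover every case of the theorem (e.g.\ $r=n$ with $k=3$).
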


\begin{proof}
As previously, by Lemma~\ref{lemma:P_balanced} we need only show that $P_k^{r,r+1}(n-1)$ is
strongly connected.  Suppose $\mathbf{a}$ and $\mathbf{b}$ are distinct edges in
$P_k^{r,r+1}(n-1)$.

If $\mathbf{a}$ has no zeros then set $\mathbf{a}^*=\mathbf{a}$. Otherwise, if $\mathbf{a}$ has
pseudoweight $r$ or $r+0.5$, then, by Lemma~\ref{lemma:k_odd_zero-free_path}, there is a directed
path in $P_k^{r,r+1}(n-1)$ from $\mathbf{a}$ to an edge $\mathbf{a}^*$ which is zero-free.
Alternatively, if $\mathbf{a}$ has pseudoweight $r+1$ then let $\mathbf{a}'$ equal $\mathbf{a}$
with one zero replaced by $(k-1)/2$.  By Lemma~\ref{lemma:one_change}, since $\mathbf{a}'$ has
pseudoweight $r+0.5$, there is a directed path in $P_k^{r,r+1}(n-1)$ from $\mathbf{a}$ to
$\mathbf{a}'$. Moreover,  by Lemma~\ref{lemma:k_odd_zero-free_path} there is a directed path in
$P_k^{r,r+1}(n-1)$ from $\mathbf{a}'$ to an edge $\mathbf{a}^*$ which is zero-free.  That is, in
every case there is a directed path in $P_k^{r,r+1}(n-1)$ from $\mathbf{a}$ to an edge
$\mathbf{a}^*$ which is zero-free.

By a precisely analogous argument there is a directed path in $P_k^{r,r+1}(n-1)$ from an edge
$\mathbf{b}^*$ containing no zeros to $\mathbf{b}$.

But there exists a directed path in $P_k^{r,r+1}(n-1)$ from $\mathbf{a}^*$ to $\mathbf{b}^*$ using
exactly the same argument as employed in Theorem~\ref{theorem:P_characteristic_k_even} (observing
that $r$ is an integer).  The result follows.
\end{proof}

The separate results for $k$ odd and $k$ even (Theorems~\ref{theorem:P_characteristic_k_even} and
\ref{theorem:P_characteristic_k_odd}) can be combined to give the following, whose proof is
immediate using the same arguments as for Corollary~\ref{corollary:H_characterisation}.

\begin{corollary} \label{corollary:P_characteristic}
Suppose $n\geq2$, $k>2$ and $n\leq r< s\leq (k-1)n$, where $r,s\in\mathbb{Z}$. Then
$P_k^{r,s}(n-1)$ is Eulerian.
\end{corollary}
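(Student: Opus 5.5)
The plan is to mirror the proof of Corollary~\ref{corollary:H_characterisation}. First, balance is immediate from Lemma~\ref{lemma:P_balanced}, since $P_k^{r,s}(n-1)$ is the union of the balanced subgraphs $P_k^{t,t}(n-1)$ over admissible $t\in[r,s]$ (integers when $k$ is even, half-integers when $k$ is odd). So it suffices to prove strong connectivity.

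For strong connectivity, observe that the edge set of $P_k^{r,s}(n-1)$ is the union of the edge sets of $P_k^{t,t+1}(n-1)$ for integers $t$ with $r\leq t<s$. For $k$ odd, each $P_k^{t,t+1}(n-1)$ already contains the half-integer level $t+0.5$, so every edge of $P_k^{r,s}(n-1)$ lies in at least one of these subgraphs. Each $P_k^{t,t+1}(n-1)$ is strongly connected by Theorem~\ref{theorem:P_characteristic_k_even} (for $k$ even) or Theorem~\ref{theorem:P_characteristic_k_odd} (for $k$ odd); the hypotheses $n\leq t<(k-1)n$ hold because $n\leq r\leq t<s\leq(k-1)n$.

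Now take edges $\mathbf{a}$ and $\mathbf{b}$, lying in $P_k^{t,t+1}(n-1)$ and $P_k^{u,u+1}(n-1)$ respectively.
\begin{itemize}
\item If $t=u$, there is a path between them directly.
\item Otherwise, choose an edge $\mathbf{c}_j$ of pseudoweight exactly $j$ for each integer $j$ strictly between $\min(t,u)$ and $\max(t,u)+1$. For example, take $\mathbf{c}_j=((k-1)^{m},q,1^{n-m-1})$ with $j=n+m(k-2)+(q-1)$; such an edge exists for every integer $j\in[n,(k-1)n]$.
\item Consecutive edges in the chain $\mathbf{a},\mathbf{c}_{t+1},\dots,\mathbf{c}_u,\mathbf{b}$ (in the case $t<u$) both lie in a common $P_k^{j,j+1}(n-1)$, so each step is a directed path.
\item Concatenating these paths gives a path from $\mathbf{a}$ to $\mathbf{b}$. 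The case $t>u$ is symmetric, using the descending chain.
\end{itemize}

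The only point needing care is the bookkeeping at the boundary levels: that every edge (including half-integer pseudoweights when $k$ is odd) is covered by some $P_k^{t,t+1}(n-1)$, and that the intermediate integer pseudoweights are realised. I expect no genuine obstacle beyond that.
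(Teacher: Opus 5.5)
Your proposal is correct and follows essentially the same route as the paper. The paper proves this corollary by reusing the argument of Corollary~\ref{corollary:H_characterisation}: balance comes from Lemma~\ref{lemma:P_balanced}, and strong connectivity comes from chaining through the strongly connected subgraphs $P_k^{t,t+1}(n-1)$ supplied by Theorems~\ref{theorem:P_characteristic_k_even} and \ref{theorem:P_characteristic_k_odd}. Your version is in fact more careful than the paper's, since you explicitly cover the half-integer pseudoweights for odd $k$, build paths in both directions rather than appealing to ``without loss of generality'', and exhibit explicit intermediate edges $((k-1)^m,q,1^{n-m-1})$ with $1\leq q\leq k-1$.
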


\section{Concluding remark}

It would be of interest to consider whether similar results could be obtained for other possible
permutation-agnostic weight functions.

\providecommand{\bysame}{\leavevmode\hbox to3em{\hrulefill}\thinspace}
\providecommand{\MR}{\relax\ifhmode\unskip\space\fi MR }
\providecommand{\MRhref}[2]{%
  \href{http://www.ams.org/mathscinet-getitem?mr=#1}{#2}
} \providecommand{\href}[2]{#2}

\end{document}